\documentclass[reqno0,12pt]{amsart}
\pagestyle{myheadings}
\pdfoutput=1

\usepackage{dsfont}
\usepackage{bbm}
\usepackage[nodate]{datetime}
\usepackage[hmargin=30mm,top=28mm,bottom=28mm,a4paper]{geometry}
\usepackage{color}
\usepackage{subfig}
\usepackage{fancyhdr}
\usepackage{amsmath,amssymb,amsfonts,amsthm}
\usepackage{amstext}
\usepackage{amsmath}
\usepackage{amssymb}
\usepackage{enumerate}
\usepackage{amsbsy}
\usepackage{amsopn}
\usepackage{bbm,amsthm}
\usepackage{amscd}
\usepackage{amsxtra}
\usepackage{todonotes}
\usepackage{soul}

\newtheorem{theorem}{Theorem}[section]
\newtheorem*{theorem*}{Theorem}
\newtheorem{lemma}{Lemma}[section]

\newtheorem{corollary}{Corollary}[section]

\theoremstyle{remark}

\newtheorem{remark}{Remark}[section]

\newcommand{\NN}{\mathds{N}}

\newcommand{\ind}{\mathds{1}}

\newcommand{\EE}{\mathbb{E}}
\newcommand{\PP}{\mathbb{P}}

\DeclareMathOperator{\corr}{corr}
\DeclareMathOperator{\var}{var}
\DeclareMathOperator{\re}{Re}


\setlength{\parindent}{20pt}
\setlength{\parskip}{.5em}
\marginparwidth 20mm

\title{How many real zeros does a random Dirichlet series have?}
\author{Marco Aymone, Susana Fr\'ometa, Ricardo Misturini}

\begin{document}

\begin{abstract}Let $F(\sigma)=\sum_{n=1}^\infty \frac{X_n}{n^\sigma}$ be a random Dirichlet series where $(X_n)_{n\in\mathbb{N}}$ are independent standard Gaussian random variables. We compute in a quantitative form the expected number of zeros of $F(\sigma)$ in the interval $[T,\infty)$, say $\mathbb{E} N(T,\infty)$, as $T\to1/2^+$. We also estimate higher moments and with this we derive exponential tails for the probability that the number of zeros in the interval $[T,1]$, say $N(T,1)$, is large. We also consider almost sure lower and upper bounds for $N(T,\infty)$. And finally, we also prove results for another class of random Dirichlet series, \textit{e.g.}, when the summation is restricted to prime numbers.
\end{abstract}

\maketitle
\section{Introduction.}
Around 1938, in a series of papers \cite{Littlewood_random_pol_1,Littlewood_random_pol_3,littlewood_random_int_1,littlewood_random_int_2}, Littlewood and Offord proved estimates for the average number of real roots of a random polynomial
$$p(z)=X_0+X_1z+...+X_nz^n,$$
where $(X_j)_{j=0}^n$ are random variables. In 1943, inspired in the first of these papers, Kac~\cite{kac_random_poly} presented a formula for the expected number of these real roots in the Gaussian case. From this formula he deduced that if $n$ is the degree of the random polynomial, and if $(X_j)_{j=0}^n$ are independent standard Gaussian variables, then
$$\EE\mbox{ Number of real roots of }p(z)=\left(\frac{2}{\pi}+o(1)\right)\log n.$$ 

An analogous statement for random variables with other distributions is also true, but this has turned out to be a great challenge in the last century, when, for instance, we consider that $(X_j)_{j=0}^n$ are Rademacher random variables (see this 1956 paper \cite{erdos_random_poly} by Erd\H{o}s and Offord for the Rademacher case, and these other papers \cite{ibragimov_random_poly_1,ibragimov_random_poly_2} by Ibragimov and Maslova for other distributions).

In the past 50 years, this beautiful theory has evolved in deepness and in many perspectives -- here we refer to these papers \cite{vu_repulsion,oanh_local_universality} by Do, Nguyen, Vu  and Nguyen, Vu for a short survey and a nice state of an art on this topic.

In Analytic Number Theory, the location of the zeros of certain analytic functions are of utmost importance. For instance, the location of the zeros of the analytic continuation of the Riemann zeta function 
$$\zeta(s)=\sum_{n=1}^{\infty}\frac{1}{n^s},\;\re(s)>1,$$  
have deep connections with the distribution of prime numbers, where here and throughout this paper, $s$ denotes a complex number $s=\sigma+it$.

The Riemann $\zeta$ function is a particular case of a Dirichlet series, and here we are interested  in the case where  we replace the constant $1$ by random variables, \textit{i.e.}, 
$$F(s)=\sum_{n=1}^\infty\frac{X_n}{n^s},$$
where $(X_n)_{n\in\NN}$ are i.i.d. Gaussian random variables with mean $0$ and variance $1$.

This random Dirichlet series $F(s)$ is, almost surely, convergent if and only if $s$ is in the complex half plane $\re(s)=\sigma>1/2$ due to the Kolmogorov Three-Series Theorem, and to classical results for general Dirichlet series. These series have been studied recently by the authors \cite{aymone_LIL}, where it has been proved a Law of the Iterated Logarithm (LIL) that describes the almost sure fluctuations of $F(\sigma)$ when $\sigma\to1/2^+$ (in the Rademacher case), and by Buraczewski et al. \cite{marynich_random_dirichlet}, where they considered a more general class of this particular random series and proved LIL and other convergence theorems.

A key difference between the zeros of this random Dirichlet series $F(s)$ and that of the Riemann zeta function, is that, $\zeta(s)$ has no real zeros\footnote{Indeed $\zeta(s)=\frac{1}{1-2^{1-s}}\sum_{n=1}^\infty\frac{(-1)^{n+1}}{n^s}$, and this alternating series is well defined for all $\re(s)=\sigma>0$. The fact that $\zeta $ has no real zeros follows from the fact that the sequence $(1/n^\sigma)_{n\in\NN}$ is decreasing and the series is alternated.} in the half plane $\re(s)>0$, while in the random case there are an infinite number of real zeros accumulating at the right of $1/2$, almost surely, see \cite{aymone_real_zeros}.

Throughout this paper we shall specialize on real zeros but in several places we will be looking at $F(s)$ for a given complex number $s$. Our target in this paper is to prove results for $F(\sigma)$ for real $\sigma>1/2$. 

For $1/2<T<U$, $N(T,U)$ denotes the number of real zeros of $F(\sigma)$ in the interval $[T,U]$, where each zero is counted without multiplicity, and $U$ can be either a real number or $\infty$. Since $F(\sigma)$ is an analytic function, $N(T,U)<\infty$ for all $T>1/2$ and $U<\infty$, almost surely. 

As far as we are aware, little attention has been given for zeros of random Dirichlet series in the literature. We found a nice geometric point of view of the expected number of zeros of a general random series of functions by Edelman and Kostlan, see \cite{edelman_roots_random_poly}. For the case of our random Dirichlet series, in \cite{edelman_roots_random_poly} appeared the following formula:
\begin{equation}\label{formula zeros}
	\EE N(T,U)=\frac{1}{\pi}\int_{T}^{U}\sqrt{\frac{d^2}{ds^2}\log \zeta(s)\bigg{|}_{s=2\sigma}}d\sigma.
\end{equation}

We recall that our random Dirichlet series is almost surely convergent at any complex number at the right of $s=1/2$, and divergent at this point and at any complex number at the left of it. By the formula above, we can deduce that for any $1/2<T<U<\infty$, the function inside the integral at the right hand-side of \eqref{formula zeros} is continuous, and hence $\EE N(T,U)<\infty$. However, the point $s=1/2$ is almost surely a singularity of the analytic function given by the random Dirichlet series $F(s)$, c.f. \cite{aymone_real_zeros}. The first aim of this paper is to make it quantitative the formula \eqref{formula zeros} as $T$ gets closer to this singularity at $s=1/2$.
\begin{theorem}\label{teorema principal} There exist $\delta>0$ and constants $c_0$, $(c_n)_{n\geq 2}$ such that, for all $T\in(1/2,1/2+\delta)$:
	\begin{equation}\label{equacao formula quantitativa}
		\mathbb{E} N(T,\infty)=\frac{1}{2\pi}\log\left(\frac{1}{T-1/2}\right)+c_0+\sum_{n=2}^\infty c_n(T-1/2)^{n}.
	\end{equation}
\end{theorem}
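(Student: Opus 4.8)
The starting point is the Edelman--Kostlan formula \eqref{formula zeros} with $U=\infty$, which reduces the theorem to understanding the local behaviour near $s=1/2^+$ of the function
\[
	g(\sigma):=\frac{1}{\pi}\sqrt{\left.\frac{d^2}{ds^2}\log\zeta(s)\right|_{s=2\sigma}}.
\]
The plan is to expand $\frac{d^2}{ds^2}\log\zeta(s)$ around the pole of $\zeta$ at $s=1$ (equivalently $\sigma=1/2$). Since $\zeta(s)=\frac{1}{s-1}+\gamma+O(s-1)$, more precisely $\zeta(s)=\frac{1}{s-1}+\sum_{k\geq0}\frac{(-1)^k}{k!}\gamma_k(s-1)^k$ with $\gamma_k$ the Stieltjes constants, we have $\log\zeta(s)=-\log(s-1)+\log\bigl(1+(s-1)\gamma+\cdots\bigr)$, so $\log\zeta(s)=-\log(s-1)+h(s-1)$ where $h$ is analytic in a neighbourhood of $0$. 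Differentiating twice, $\frac{d^2}{ds^2}\log\zeta(s)=\frac{1}{(s-1)^2}+h''(s-1)$. Substituting $s=2\sigma$ and writing $u=\sigma-1/2$ (so $s-1=2u$), we get
\[
	\left.\frac{d^2}{ds^2}\log\zeta(s)\right|_{s=2\sigma}=\frac{1}{4u^2}+h''(2u)=\frac{1}{4u^2}\Bigl(1+4u^2 h''(2u)\Bigr).
\]
Taking the square root, $g(\sigma)=\frac{1}{2\pi u}\sqrt{1+4u^2 h''(2u)}=\frac{1}{2\pi u}\bigl(1+\phi(u)\bigr)$, where $\phi(u)=\sqrt{1+4u^2h''(2u)}-1$ is analytic near $u=0$ with $\phi(0)=0$; hence $\phi(u)/u=:\psi(u)$ is also analytic near $0$. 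This gives the clean decomposition
\[
	g(\sigma)=\frac{1}{2\pi u}+\psi(u),\qquad \psi \text{ analytic near } 0.
\]

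Next I would integrate. Fix a base point, say $\sigma_0=1/2+\delta$ small enough that $g$ is analytic and real on $(1/2,2\sigma_0)$ (this needs $\zeta$ nonvanishing there, and $\frac{d^2}{ds^2}\log\zeta>0$ on the relevant real segment, which holds near the pole since the leading term $\frac{1}{4u^2}$ dominates — I would state this as the choice of $\delta$). For $T\in(1/2,\sigma_0)$,
\[
	\EE N(T,\infty)=\int_T^\infty g(\sigma)\,d\sigma=\int_T^{\sigma_0} g(\sigma)\,d\sigma+\int_{\sigma_0}^\infty g(\sigma)\,d\sigma.
\]
The second integral is a finite constant, call it $A$; its finiteness follows exactly as in the remark after \eqref{formula zeros} together with the decay of $\frac{d^2}{ds^2}\log\zeta(2\sigma)$ as $\sigma\to\infty$. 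For the first integral, using $g(\sigma)=\frac{1}{2\pi u}+\psi(u)$ with $u=\sigma-1/2$,
\[
	\int_T^{\sigma_0} g(\sigma)\,d\sigma=\frac{1}{2\pi}\log\frac{\sigma_0-1/2}{T-1/2}+\int_{T-1/2}^{\sigma_0-1/2}\psi(u)\,du=\frac{1}{2\pi}\log\frac{1}{T-1/2}+B-\int_0^{T-1/2}\psi(u)\,du,
\]
where $B=\frac{1}{2\pi}\log(\sigma_0-1/2)+\int_0^{\sigma_0-1/2}\psi(u)\,du$ is constant. Since $\psi$ is analytic near $0$, its antiderivative $\Psi(v):=\int_0^v\psi(u)\,du$ has a convergent power series $\Psi(v)=\sum_{m\geq1}a_m v^m$; note there is no constant term. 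Setting $v=T-1/2$ and collecting constants into $c_0:=A+B$ gives
\[
	\EE N(T,\infty)=\frac{1}{2\pi}\log\frac{1}{T-1/2}+c_0-\sum_{m\geq1}a_m(T-1/2)^m.
\]

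Two cosmetic points remain to match the stated form \eqref{equacao formula quantitativa} exactly. First, the sum in the theorem starts at $n=2$, so I must check the coefficient of $(T-1/2)^1$ vanishes, i.e. $a_1=\psi(0)=0$; equivalently $\phi'(0)=0$, i.e. $\phi(u)=O(u^2)$. This follows because $\phi(u)=\sqrt{1+4u^2h''(2u)}-1=2u^2h''(0)+O(u^3)$, which is $O(u^2)$, so indeed $\psi(u)=\phi(u)/u=O(u)$ and $\psi(0)=0$, hence $a_1=0$. Setting $c_n:=-a_n$ for $n\geq2$ finishes the identity. Second, I should record the radius of convergence: $\Psi$ converges on some disc $|v|<r$, so the formula holds for $T-1/2<\min(r,\delta)$; shrinking $\delta$ if necessary gives the clean statement.

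The main obstacle — really the only nontrivial analytic input — is justifying the structural claim that $\log\zeta(s)+\log(s-1)$ extends analytically across $s=1$ and, consequently, that $\frac{d^2}{ds^2}\log\zeta(s)-\frac{1}{(s-1)^2}$ is analytic near $s=1$ with the square-root manipulation producing an \emph{analytic} (not merely smooth) function $\psi$. This rests on: (i) $\zeta$ has a simple pole at $s=1$ with residue $1$ and is otherwise analytic and \emph{nonvanishing} in a punctured neighbourhood of $s=1$ (classical, e.g. from the Euler product / the fact that $\zeta(s)\neq0$ for $\re s\geq1$ near $s=1$, or simply from $(s-1)\zeta(s)\to1$); (ii) on the real segment just to the right of $\sigma=1/2$, $\frac{d^2}{ds^2}\log\zeta(2\sigma)$ is real and positive, so the square root is a genuine analytic function there — which is forced by the dominant $\frac{1}{4u^2}$ term once $\delta$ is small. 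Everything else is bookkeeping with power series and the observation that integrating a power series term-by-term is legitimate inside its disc of convergence. I would also remark that the same computation identifies $c_0$ and the $c_n$ in principle in terms of the Stieltjes constants $\gamma_k$, though the theorem only asserts existence.
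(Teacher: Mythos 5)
Your proposal is correct and follows essentially the same route as the paper: both expand the logarithmic derivative of $\zeta$ around the simple pole at $s=1$, write the integrand as $\frac{1}{2(\sigma-1/2)}\sqrt{1+A}$ with $A=O\!\left((s-1)^2\right)$ (which is exactly what makes the linear term vanish and the sum start at $n=2$), integrate the resulting power series term by term, and absorb $\mathbb{E}N(1/2+\delta,\infty)$ into the constant $c_0$ using the Dirichlet series $\sum_n \Lambda(n)\log n\, n^{-s}$ for the tail. The only difference is cosmetic (you expand $\log\zeta$ rather than $\zeta'/\zeta$, and your tail estimate is stated more briefly than the paper's explicit bound via $\Lambda(n)\le\log n$), so no changes are needed.
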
  

\begin{remark}\label{remark_principal}
	Considering the Laurent expansion of $\zeta$ around its simple pole at $s=1$:
	\begin{equation}\label{equation Laurent expansion}
		\zeta(s)=\frac{1}{s-1}+\sum_{n=0}^\infty (-1)^n\frac{\gamma_n}{n!} (s-1)^n,
	\end{equation}
	where $\gamma_n$ is called the $n$-th Stieltjes constant, it is possible to show that the coefficients $c_n$, in Theorem \ref{teorema principal}, for $n\geq2$, are given by $\frac{1}{\pi}$ times a polynomial $p_n$ with rational coefficients in the variables $(\gamma_n)_{n\geq0}$. In fact, these coefficients can be explicitly computed by formal expansion of power series. For instance, $c_2=\frac{2\gamma_1+\gamma_0^2}{2\pi}$.  
\end{remark}

\subsection{Moment bounds}
Another interesting investigation comes when we consider higher moments $\EE N(T,1)^k$ for a real number $k\geq 1$. We were able to proof the following estimate. 
\begin{theorem}[Moment estimates]\label{theorem moments} There exists a constant $C>0$ such that for all $k\geq 1$ and all $1/2<T<1$,
	$$\EE N(T,1)^k \leq \left(Ck \log\left(\frac{1}{T-1/2}\right) \right)^k.$$
\end{theorem}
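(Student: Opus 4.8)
The plan is to bound the $k$-th moment of $N(T,1)$ by combining a Jensen/union-type argument over dyadic subintervals with the basic first-moment estimate from Theorem~\ref{teorema principal}, together with a concentration input that controls the probability of seeing many zeros in a short interval. First I would dyadically decompose $[T,1]$: write $T_j = 1/2 + 2^j(T-1/2)$ for $0 \le j \le J$, where $J \asymp \log\frac{1}{T-1/2}$ is chosen so that $T_J \asymp 1$, so that $[T,1] = \bigcup_{j=0}^{J-1}[T_j,T_{j+1}] \cup [T_J,1]$ and $N(T,1) \le \sum_{j} N(T_j,T_{j+1})$ (writing $N_j$ for the $j$-th term, and with the usual slight abuse that $[T_J,1]$ is lumped in). On each block the expected number of zeros is $O(1)$: indeed $\EE N_j = \frac1\pi\int_{T_j}^{T_{j+1}}\sqrt{(\log\zeta)''(2\sigma)}\,d\sigma$ by \eqref{formula zeros}, and since $(\log\zeta)''(2\sigma) \asymp (\sigma-1/2)^{-2}$ near $\sigma=1/2$ (from the pole of $\zeta$ at $1$), the integrand is $\asymp (\sigma-1/2)^{-1}$ and the integral over a dyadic block is $O(1)$, uniformly in $j$.

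Next I would upgrade the first-moment bound on a single block to an exponential tail bound of the shape $\PP(N_j \ge m) \le A\,\rho^{m}$ for absolute constants $A$ and $\rho<1$, uniformly in $j$ and in $T$. The cleanest route is a scaling/comparison argument: the local behaviour of $F(\sigma)$ near each $T_j$, after the change of variables $\sigma \mapsto 1/2 + (T_j-1/2)u$, is governed by a Gaussian field whose covariance structure is comparable (uniformly in $j$) to a fixed model, so the number of zeros in a fixed-length window of the rescaled variable has a distribution with uniformly exponential tails; this is a standard consequence of the fact that a nondegenerate real-analytic Gaussian process on a compact interval has a number of zeros with all exponential moments finite (e.g.\ via Kac--Rice for factorial moments, or via the Jensen-type inequality bounding the number of zeros of an analytic function by the growth of $\log|F|$, combined with Gaussian concentration for $\sup|F|$ and anticoncentration for $|F|$ at a point). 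Granting such a tail, one has $\EE N_j^k \le \sum_{m\ge1} m^k\,\PP(N_j\ge m) \ll_\rho (Ck)^k$ for an absolute $C$, again uniformly in $j$.

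Finally I would assemble the pieces. By the power-mean (Jensen) inequality,
\[
\EE N(T,1)^k \le \EE\Bigl(\sum_{j=0}^{J} N_j\Bigr)^k \le (J+1)^{k-1}\sum_{j=0}^{J}\EE N_j^k \le (J+1)^{k}\,\max_j \EE N_j^k \ll (J+1)^k (Ck)^k,
\]
and since $J+1 \ll \log\frac{1}{T-1/2}$ (and we may assume $\log\frac1{T-1/2}\ge 1$), this is $\bigl(C'k\log\frac{1}{T-1/2}\bigr)^k$, as desired; absorbing constants gives the stated form.

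The main obstacle I expect is the uniformity in $j$ of the single-block exponential tail bound: one must control the zeros of the rescaled Gaussian field not by a fixed process but by a family of processes indexed by $T_j$ (equivalently by the point $2T_j$ near $1$, where $\zeta$ has its pole), and show the relevant Gaussian covariances, their derivatives, and the associated variance lower bounds are comparable across the whole family with constants independent of $j$ and $T$. Handling the outermost block $[T_J,1]$ (bounded away from the singularity) is easy and separate; the real work is near $1/2$, where one wants to exploit the self-similar structure already visible in \eqref{formula zeros} and in the logarithmic main term of Theorem~\ref{teorema principal}.
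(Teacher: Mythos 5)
Your proposal follows essentially the same route as the paper: a geometric decomposition of $[T,1]$ into $O(\log\frac{1}{T-1/2})$ self-similar blocks, a per-block bound on the zero count via the Jensen-type inequality \eqref{equacao desigualdade zeros} in terms of $\log(M/|F(\sigma_0)|)$, uniform moment/tail control of that quantity by rescaling the Gaussian field (concentration for the sup, anticoncentration at a point), and assembly by the power-mean inequality. The uniformity in $j$ that you flag as the main obstacle is exactly what the paper's Lemma \ref{lemma esperanca log a k} establishes, via the normalization by $\sqrt{\delta}$, Lemma \ref{lemma bounds d series} together with L\'evy's maximal inequality to control $\sup_{C_1}|F|$, and Gaussian anticoncentration at the leftmost point of the circle, so your sketch fills in along the lines you indicate.
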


As an application of this result, for $C>0$ as in Theorem \ref{theorem moments}, for any fixed $\lambda>2C$, by choosing $k=\lambda/2C$ in Theorem \ref{theorem moments}, we obtain the following Corollary by making a direct usage of Chebyshev's inequality.

\begin{corollary}[Exponential tails] \label{corollary exponential tails} There exist constants $c,C>0$ such that for any $1/2<T<1$ and any $\lambda>C$, 
	$$\PP\left(N(T,1)\geq \lambda \log(1/(T-1/2))\right)\leq \exp(-c\lambda).$$
\end{corollary}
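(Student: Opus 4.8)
The plan is to deduce Corollary \ref{corollary exponential tails} directly from Theorem \ref{theorem moments} by optimizing over the moment parameter $k$, exactly as indicated in the paragraph preceding the statement. Fix $1/2<T<1$ and set $L:=\log(1/(T-1/2))$; since $T-1/2<1/2<1$ we have $L>0$. For any real $k\geq1$, Chebyshev's inequality (i.e.\ Markov's inequality applied to the nonnegative random variable $N(T,1)^k$) together with Theorem \ref{theorem moments} gives
\begin{equation*}
\PP\big(N(T,1)\geq \lambda L\big)=\PP\big(N(T,1)^k\geq (\lambda L)^k\big)\leq \frac{\EE N(T,1)^k}{(\lambda L)^k}\leq \frac{(CkL)^k}{(\lambda L)^k}=\left(\frac{Ck}{\lambda}\right)^k,
\end{equation*}
where $C$ is the constant furnished by Theorem \ref{theorem moments}.

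Next I would choose $k$ so as to make the base $Ck/\lambda$ a fixed fraction strictly below $1$. Taking $k=\lambda/(2C)$ yields $Ck/\lambda=1/2$, hence the right-hand side becomes $2^{-\lambda/(2C)}=\exp(-c\lambda)$ with $c=(\log 2)/(2C)$. This choice is legitimate precisely because Theorem \ref{theorem moments} is stated for all real $k\geq1$ (not merely integers), provided the constraint $k=\lambda/(2C)\geq1$ holds, i.e.\ provided $\lambda\geq 2C$. After renaming the constant $2C$ as $C$ in the statement of the corollary (and retaining $c=(\log 2)/(2C)$), this is exactly the claimed bound for all $\lambda>C$.

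There is essentially no obstacle in this argument: all of the analytic content lives in Theorem \ref{theorem moments}, and what remains is the routine moment-to-tail conversion. The only points requiring mild care are that the optimal exponent is generally non-integer, which is harmless since the moment bound was established for real exponents, and that one must keep track of the range of $\lambda$ for which $k\geq1$, which is what produces the threshold $\lambda>C$ appearing in the statement.
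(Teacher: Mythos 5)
Your argument is correct and is exactly the paper's route: the corollary is obtained from Theorem \ref{theorem moments} via Markov's inequality applied to $N(T,1)^k$ with the choice $k=\lambda/(2C)$, giving the bound $(1/2)^{\lambda/(2C)}=\exp(-c\lambda)$. Your additional care about $k\geq1$ and non-integer exponents matches the threshold $\lambda>C$ (after renaming $2C$ as $C$) implicit in the paper's statement.
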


\subsection{Almost sure bounds}
We observe that the zeros of a random polynomial can be very distinct as the degree of the polynomial varies. Here we observe that in our random Dirichlet series case, as $T$ varies, $N(T,\infty)$ is non-decreasing as $T\to1/2^+$. Therefore it becomes natural to consider almost sure limits, and this is the content of our next result. 
\begin{theorem}[Almost sure bounds]\label{theorem almost sure} We have the following almost sure limits:
	\begin{align*}
		&\limsup_{T\to1/2^+}\frac{N(T,\infty)}{\log\left(\frac{1}{T-1/2}\right)\log\log\left(\frac{1}{T-1/2}\right)}<\infty,\\
		&\liminf_{T\to1/2^+}\frac{N(T,\infty)}{\left(\log\left(\frac{1}{T-1/2}\right)\right)^{1/2-\epsilon}}=\infty, \mbox{ for all }\epsilon>0.
	\end{align*}
\end{theorem}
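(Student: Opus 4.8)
The plan is to treat the two assertions separately and reduce both to the behaviour of $N$ along the sequence $T_k:=1/2+e^{-k}$, for which $\log(1/(T_k-1/2))=k$, using throughout that $T\mapsto N(T,\infty)$ is non-increasing and that $N(T,\infty)=N(T,1)+N(1,\infty)$ with $N(1,\infty)<\infty$ almost surely. For the \emph{upper bound} I would invoke Borel--Cantelli together with Theorem~\ref{theorem moments}: taking the moment of order $j\asymp\log k$ and applying Markov's inequality gives $\PP\!\big(N(T_k,1)\ge C'k\log k\big)\le k^{-c}$ with some $c>1$, for a suitable absolute constant $C'$; hence almost surely $N(T_k,1)\le C'k\log k$ for all large $k$, and the monotonicity in $T$ upgrades this to $N(T,\infty)\ll \log(1/(T-1/2))\,\log\log(1/(T-1/2))$ for all $T$ close to $1/2$. (One may instead quote Corollary~\ref{corollary exponential tails} with $\lambda=\lambda_k\to\infty$ slowly; the extra $\log\log$ relative to the order $\asymp\log(1/(T-1/2))$ of the expectation in Theorem~\ref{teorema principal} is the price of deducing an almost sure statement from moment information.)

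The \emph{lower bound} needs a genuinely different mechanism, since the moment bounds only control $N$ from above; the zeros will be produced as sign changes of $F$. For a fixed $\epsilon>0$ and each large $k$ I would construct pairwise disjoint subintervals $I_1,\dots,I_m\subset(1/2+e^{-k},1)$ with $m\gg k^{1/2-\epsilon}$, and events $A_1,\dots,A_m$, such that on $A_i$ the function $F$ changes sign on $I_i$ (hence vanishes there), the $A_i$ are adapted to a natural filtration with conditional failure probabilities $\PP(A_i^{c}\mid\mathcal{F}_{i-1})$ bounded above by a quantity tending to $0$, and these are summable enough that a martingale (Azuma-type) estimate combined with Borel--Cantelli forces $\#\{i\le m:A_i\text{ occurs}\}\gg k^{1/2-\epsilon}$ almost surely for all large $k$. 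Since the $I_i$ are disjoint this yields $N(T_k,\infty)\gg k^{1/2-\epsilon}$ a.s.\ eventually; interpolating over $T$ as above, and then replacing $\epsilon$ by $\epsilon/2$, gives $\liminf_{T\to1/2^+}N(T,\infty)/(\log(1/(T-1/2)))^{1/2-\epsilon}=\infty$ for every $\epsilon>0$.

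To build the blocks I would fix a rapidly increasing sequence $1=N_0<N_1<\cdots$, split $F=G_i+H_i$ with $G_i(\sigma)=\sum_{N_{i-1}<n\le N_i}X_n n^{-\sigma}$, and take $I_i$ to lie in the $\sigma$--window around $1/2+1/\log N_i$, i.e.\ exactly the range where the frequencies $\log n$ with $N_{i-1}<n\le N_i$ carry the bulk of the variance $\zeta(2\sigma)=\EE F(\sigma)^2$; this can be read off the local behaviour of $\zeta$ near $s=1$ and is the analytic content behind \eqref{formula zeros}. On that window $\var G_i\gg\var H_i$ once $R_i:=\log N_i/\log N_{i-1}$ is large --- indeed the ``head'' of $H_i$ (terms $n\le N_{i-1}$) contributes only a $\asymp 1/R_i$ fraction of the variance, and its ``tail'' (terms $n>N_i$) an exponentially small fraction there. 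Conditioning on $\mathcal{F}_{i-1}=\sigma(X_n:n\le N_{i-1})$, the head of $H_i$ becomes a fixed slowly varying function of small sup-norm, the tail of $H_i$ and $G_i$ remain independent and independent of $\mathcal{F}_{i-1}$, so $F$ restricted to $I_i$ looks like $G_i$ plus a small, essentially harmless perturbation; and $G_i$, being a random Dirichlet polynomial that oscillates on $\asymp\log R_i$ scales inside $I_i$, changes sign there with conditional probability $\to1$ as $R_i\to\infty$. Letting $A_i$ encode ``$G_i$ has a sign change in $I_i$ of amplitude exceeding the perturbation,'' the properties claimed in the previous paragraph follow, with the conditional failure probability of $A_i$ decaying in $R_i$; the bookkeeping of these failure probabilities against the number of disjoint blocks that fit between the scales $e^{-k}$ and $1$ determines the exponent obtained, here $1/2-\epsilon$.

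The step I expect to be the main obstacle is the per-block sign-change estimate: quantifying, with an explicitly controlled (and ultimately summable) failure probability, that the random Dirichlet polynomial $G_i$ changes sign on $I_i$ and does so with amplitude exceeding the perturbation coming from $H_i$. This is an anti-concentration (small-ball) statement for the Gaussian vector $(G_i(\sigma))_{\sigma\in S}$, with $S\subset I_i$ a finite well-spread set, whose covariance is built from values of $\zeta$ and is close to degenerate; I would approach it either by comparison with a Brownian-type reference process or by a direct second-moment computation for the number of sign changes along $S$, the delicate point being the interplay between the correlation decay of $(G_i(\sigma))_\sigma$ and the $\asymp\log R_i$ length of the oscillation window. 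Everything else --- the variance asymptotics for $\zeta$ near $1$, the Gaussian domination of $H_i$ by $G_i$, the martingale concentration, and the interpolation in $T$ --- is comparatively routine.
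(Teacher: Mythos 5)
Your upper bound is fine and is essentially the paper's own argument: apply Corollary~\ref{corollary exponential tails} (equivalently, Theorem~\ref{theorem moments} with $k\asymp\log\log$ of the scale) with $\lambda$ of order $\log\log(1/(T_n-1/2))$ along a geometric sequence $T_n\to 1/2^+$, sum the probabilities, use Borel--Cantelli, interpolate by monotonicity of $T\mapsto N(T,1)$, and note that $N(1,\infty)<\infty$ almost surely because the series converges absolutely on $\re(s)>1$ and an analytic Dirichlet series has a zero-free right half plane.

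The lower bound, however, has a genuine gap. Everything rests on the per-block statement that, conditionally on the past, the Dirichlet polynomial $G_i$ changes sign on $I_i$ with amplitude exceeding the perturbation from $H_i$, with an explicit failure probability decaying in $R_i$; you flag this yourself as the main obstacle and only name two possible strategies without carrying either out. That estimate is exactly the anti-concentration content of the theorem, so the proposal as written does not prove the lower bound. Two supporting claims are also inaccurate as stated: at $\sigma=1/2+1/\log N_i$ the terms $n>N_i$ contribute a constant fraction ($\asymp e^{-2}$) of the variance $\zeta(2\sigma)$, not an exponentially small one (you would need to work at $1/2+A/\log N_i$ with $A$ large), and since that tail is not $\sigma(X_n:n\le N_i)$-measurable, the events $A_i$ as described are not adapted to your filtration, so the Azuma step needs restructuring. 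The paper sidesteps anti-concentration entirely: it samples the signs of $F$ at $\sigma_n=1/2+2^{-n}$, shows via the Gaussian orthant formula that $\corr\left(\ind_{[F(\sigma_k)>0]},\ind_{[F(\sigma_l)>0]}\right)\ll 2^{-|k-l|/2}$ (Lemmas~\ref{lemma calculo da correlacao} and~\ref{lemma correlacao de eventos}), deduces $S_{\pm}(R)=R/2+O_\epsilon(R^{1/2+\epsilon})$ almost surely from a weak-dependence strong law plus Kronecker's lemma, and then extracts one sign change (hence one zero) from each block $[\sigma_{R_{n+1}},\sigma_{R_n}]$ with $R_n=[n^{2+8\epsilon}]$; the $R^{1/2+\epsilon}$ error term is precisely what forces the exponent $1/2-\epsilon$. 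If you could actually establish your per-block sign-change lemma, your scheme would plausibly give order $\log(1/(T-1/2))$ up to logarithms, which is stronger than the stated theorem and close to what the paper's concluding remarks conjecture, but as it stands the decisive lemma is missing.
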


We believe that the upper bound above is close to be optimal, and at the final section we discuss how our methods can be, perhaps, modified in order to get a lower bound of the form $\log(1/(T-1/2))$.

\subsection{More general random Dirichlet series}
We also compute the expected number of real zeros of random Dirichlet series of the form
$$F(\sigma):=\sum_{p}\frac{X_p}{p^{\sigma}},$$
where $p$ runs orderly over an increasing set of positive real numbers $\mathcal{P}:=\{p_1<p_2<...\}$ with $p_1\geq 1$ and $p_n\to\infty$, and $X_p$ are independent standard Gaussian random variables. We assume some regularity in the counting function $\pi(x):=|\{p\leq x:p\in\mathcal{P}\}|$:
\begin{equation}\label{equation definition pi(x)}
	\pi(x)=(1+o(1))x(\log x)^\alpha,\; x\to\infty,
\end{equation}
where $\alpha$ is a real number. As an example, the positive integers satisfy the quantitative statement above with $\alpha=0$, and the prime numbers with $\alpha=-1$, due to the Prime Number Theorem. 

We denote by $N_{\alpha}(T,U)$ the number of zeros in the interval $[T,U]$ of the random series $F(\sigma)$ associated to $\mathcal{P}$ satisfying \eqref{equation definition pi(x)}. Regardless the value of $\alpha$, we have that $F(s)$ converges for all $\re(s)>1/2$, and diverges for all $\re(s)<1/2$, almost surely.

By letting 
$$\zeta_{\alpha}(s):=\sum_{p}\frac{1}{p^s},$$
we see from \cite{edelman_roots_random_poly} that \eqref{formula zeros} generalizes to
\begin{equation}\label{formula zeros alpha}
	\EE N_\alpha(T,U)=\frac{1}{\pi}\int_{T}^{U}\sqrt{\frac{d^2}{ds^2}\log \zeta_\alpha(s)\bigg{|}_{s=2\sigma}}d\sigma.
\end{equation}

It is important to observe that the assumption \eqref{equation definition pi(x)} is not enough to deduce good analytic properties of $\zeta_{\alpha}(s)$ around its singularity at $s=1$. Even so, a qualitative result, weaker in comparison with Theorem \ref{teorema principal}, can be obtained.
\begin{theorem}\label{teorema alpha} As $T\to1/2^+$, we have that
	$$\EE N_{\alpha}(T,\infty)=(1+o(1))\times\begin{cases}&\frac{\sqrt{1+\alpha}}{2\pi}\log\left(\frac{1}{T-1/2}\right), \mbox{ if }\alpha>-1,\\&\frac{1}{\pi}\sqrt{\log\left(\frac{1}{T-1/2}\right)}, \mbox{ if }\alpha=-1, \\& c, \mbox{ if }\alpha<-1, \end{cases}$$
	where $c>0$ is a number that depends on the set $\mathcal{P}$.
\end{theorem}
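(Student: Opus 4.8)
The plan is to read all three asymptotics off the right-hand side of \eqref{formula zeros alpha} as $\sigma\to1/2^+$. Put $h_\alpha(s):=\frac{d^2}{ds^2}\log\zeta_\alpha(s)$. On the real axis, for $s$ to the right of the abscissa of convergence, $\zeta_\alpha(s)=\sum_n e^{-s\log p_n}$ is the Laplace transform of the positive measure $\sum_n\delta_{\log p_n}$, so $\log\zeta_\alpha$ is convex and
\[
h_\alpha(s)=\var_s(\log p)\ge 0,\qquad\text{where }\mu_s(\{p\})=\frac{p^{-s}}{\zeta_\alpha(s)},
\]
which in particular makes the square root in \eqref{formula zeros alpha} legitimate. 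Since $h_\alpha$ is continuous on $(1/2,\infty)$ and, as $\sigma\to\infty$, $\sqrt{h_\alpha(2\sigma)}$ decays exponentially, the contribution to $\EE N_\alpha(T,\infty)$ of any fixed range $\sigma\ge 1/2+\eta$ is a finite constant; everything therefore reduces to the behaviour of $h_\alpha(s)$, equivalently of $\EE_s[\log p]=-\zeta_\alpha'(s)/\zeta_\alpha(s)$ and $\EE_s[(\log p)^2]=\zeta_\alpha''(s)/\zeta_\alpha(s)$, as $s\to1^+$.

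First I would compute $\zeta_\alpha$ and its first two derivatives by partial summation: for $j=0,1,2$, writing $\zeta_\alpha^{(j)}(s)=(-1)^j\sum_p(\log p)^jp^{-s}$ and integrating by parts against $\pi$ produces $\zeta_\alpha^{(j)}(s)=\int_{p_1}^\infty\pi(x)\,g_j(x;s)\,dx$ with $g_j(x;s)$ an explicit smooth kernel built from $x^{-s-1}$ and powers of $\log x$. Substituting $\pi(x)=x(\log x)^\alpha+E(x)$, $E(x)=o(x(\log x)^\alpha)$, and then $x=e^u$, the main terms reduce to $\int_{\log p_1}^\infty u^{\alpha+j}e^{-(s-1)u}\,du$ plus lower-order pieces; such integrals are $\sim\Gamma(\alpha+j+1)(s-1)^{-(\alpha+j+1)}$ when $\alpha+j+1>0$ and reduce to the exponential integral, hence to $-\log(s-1)+O(1)$, in the borderline case $\alpha+j+1=0$. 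The $E$-contribution is absorbed by splitting $\int_{p_1}^\infty=\int_{p_1}^{X_0}+\int_{X_0}^\infty$ with $X_0=X_0(\veps)$ fixed: the tail is $\le\veps$ times the corresponding main term, and the head is $O(1)$ uniformly for $s$ near $1$. Collecting, for $\alpha>-1$ one gets $\zeta_\alpha(s)\sim\Gamma(\alpha+1)(s-1)^{-(\alpha+1)}$, $\EE_s[\log p]=\frac{\alpha+1}{s-1}(1+o(1))$ and $\EE_s[(\log p)^2]=\frac{(\alpha+1)(\alpha+2)}{(s-1)^2}(1+o(1))$, whence
\[
h_\alpha(s)=\EE_s[(\log p)^2]-\bigl(\EE_s[\log p]\bigr)^2=\frac{\alpha+1}{(s-1)^2}\,(1+o(1)),
\]
the near-cancellation of the two $(s-1)^{-2}$ terms being harmless because each error is genuinely $o\bigl((s-1)^{-2}\bigr)$. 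For $\alpha=-1$ the same scheme gives $\zeta_{-1}(s)\sim\log\frac{1}{s-1}$, $\EE_s[\log p]\sim\bigl((s-1)\log\frac{1}{s-1}\bigr)^{-1}$, $\EE_s[(\log p)^2]\sim\bigl((s-1)^2\log\frac{1}{s-1}\bigr)^{-1}$, so $h_{-1}(s)=\bigl((s-1)^2\log\frac{1}{s-1}\bigr)^{-1}(1+o(1))$, the squared mean now being of strictly smaller order. For $\alpha<-1$ one has $\sum_p p^{-1}<\infty$, so $\zeta_\alpha(s)\to\zeta_\alpha(1)>0$, while $h_\alpha(s)\le\EE_s[(\log p)^2]=\zeta_\alpha''(s)/\zeta_\alpha(s)=O\bigl((s-1)^{-\max(0,\alpha+3)}\bigr)$ (an extra factor $\log\frac{1}{s-1}$ appearing when $\alpha=-3$).

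The theorem then follows by integrating $\sqrt{h_\alpha(2\sigma)}$ and splitting at $1/2+\eta$. For $\alpha>-1$, $\sqrt{h_\alpha(2\sigma)}=\frac{\sqrt{\alpha+1}}{2(\sigma-1/2)}(1+o(1))$ and $\frac{1}{\pi}\int_T^{1/2+\eta}\frac{\sqrt{\alpha+1}}{2(\sigma-1/2)}\,d\sigma=\frac{\sqrt{\alpha+1}}{2\pi}\log\frac{1}{T-1/2}+O_\eta(1)$, the factor $(1+o(1))$ being absorbed since the main term diverges. For $\alpha=-1$, using $\int_T^{1/2+\eta}\frac{d\sigma}{2(\sigma-1/2)\sqrt{\log(1/(2(\sigma-1/2)))}}=\sqrt{\log\frac{1}{T-1/2}}\,(1+o(1))$ gives $\EE N_{-1}(T,\infty)=\frac{1}{\pi}\sqrt{\log\frac{1}{T-1/2}}\,(1+o(1))$. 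For $\alpha<-1$ the bound on $h_\alpha$ shows $\sqrt{h_\alpha(2\sigma)}=O\bigl((\sigma-1/2)^{-\beta}\bigr)$ with $\beta=\max(0,(\alpha+3)/2)<1$ (up to a harmless $\sqrt{\log}$ at $\alpha=-3$), hence $\sqrt{h_\alpha(2\,\cdot\,)}$ is integrable near $1/2$ and $\EE N_\alpha(T,\infty)\to c:=\frac{1}{\pi}\int_{1/2}^\infty\sqrt{h_\alpha(2\sigma)}\,d\sigma$ as $T\to1/2^+$; this limit is finite, strictly positive since the integrand is $>0$ on $(1/2,\infty)$, and visibly depends on the whole set $\mathcal{P}$.

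The hard part will be the error analysis: \eqref{equation definition pi(x)} controls only $\pi(x)$ and not any ``derivative'' of it, so each derivative of $\zeta_\alpha$ must be moved onto the smooth kernels $g_j$ by partial summation before the asymptotic count is inserted, and the resulting $o(\cdot)$ must be carried uniformly in $s$ through the Gamma/exponential-integral asymptotics so that it remains of lower order even after the delicate subtraction $\EE_s[(\log p)^2]-(\EE_s[\log p])^2$ that produces the coefficient $\alpha+1$. A secondary difficulty is the bookkeeping across the three regimes, in particular the appearance of exponential integrals and an iterated logarithm at $\alpha=-1$, and, for $\alpha\in(-3,-1)$, verifying that $\sqrt{h_\alpha(2\sigma)}$ stays integrable at $\sigma=1/2$ even though $h_\alpha$ itself blows up there.
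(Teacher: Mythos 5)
Your proposal is correct and follows essentially the same route as the paper: it starts from \eqref{formula zeros alpha}, estimates $\zeta_\alpha^{(\beta)}(2\sigma)$ by partial summation against $\pi(x)\sim x(\log x)^\alpha$, reduces to the integrals $\int u^{\gamma}e^{-(s-1)u}du$ (exactly the paper's Lemma \ref{lemma integral J}), and integrates the resulting asymptotics with a split at $1/2+\eta$, including the same observation that the subtraction producing the coefficient $\alpha+1$ is safe because each error is multiplicative. The only (harmless) stylistic differences are your variance interpretation of the integrand, which gives positivity and exponential decay at infinity where the paper instead factors out $p_1^{-\sigma}$ and bounds $\sqrt{\zeta''_{\mathcal{Q}}}$, and your single bound $h_\alpha\le\zeta_\alpha''/\zeta_\alpha=O\bigl((s-1)^{-\max(0,\alpha+3)}\bigr)$ for $\alpha<-1$ in place of the paper's subdivision into sub-cases.
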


\section{Notation}
We use the standard notation: 
\begin{enumerate}
	\item $f(x)\ll g(x)$ or equivalently $f(x)=O(g(x))$;
	\item $f(x)=o(g(x))$;
	\item $f(x)\sim g(x)$.
\end{enumerate}
The case (1) is used whenever there exists a constant $C>0$ such that $|f(x)|\leq C |g(x)|$, for all $x$ in a set of numbers. This set of numbers when not specified is the real interval $[L,\infty]$, for some $L>0$, but also there are instances where this set can accumulate at the right or at the left of a given real number, or at complex number. Sometimes we also employ the notation $\ll_\epsilon$ or $O_\epsilon$ to indicate that the implied constant may depends in $\epsilon$. 

In case (2), we mean that $\lim_{x}f(x)/g(x)=0$. When not specified, this limit is as $x\to \infty$ but also can be as $x$ approaches any complex number in a specific direction. 

In case (3), we mean that $f(x)=(1+o(1))g(x)$.

\section{Proof of the main results}
\subsection{The expected number of zeros}
The essence of the proof of Theorem \ref{teorema principal} is the complex analytic theory of the Riemann zeta function as we show below. 
\begin{proof}[Proof of Theorem \ref{teorema principal}] We begin by recalling some well known facts about the Riemann zeta function. Classically defined for $\re(s)>1$ as
	$$\zeta(s)=\sum_{n=1}^{\infty}\frac{1}{n^s},$$
	we have that actually $\zeta$ has analytic continuation to the all complex plane except at $s=1$ where has a simple pole with residue $1$.
	
	In what follows, we will prove eq. \eqref{equacao formula quantitativa} without specifying the values of $c_n$. Afterward, we will indicate how to compute the coefficients $c_n$, $n\geq2$, as stated in Remark \ref{remark_principal}.
	
	We begin by observing that
	$$\frac{d}{ds}\log \zeta(s)=\frac{\zeta'(s)}{\zeta(s)}=-\frac{1}{s-1}+\sum_{n=0}^\infty a_n(s-1)^n,$$
	where the power series above is convergent in the open ball centered at $s=1$ and with radius $3$, since the zero of $\zeta(s)$ closest to $s=1$ is at $s=-2$. Thus, we reach
	\begin{align*}
		\frac{d^2}{ds^2}\log \zeta(s)&=\frac{d}{ds}\frac{\zeta'(s)}{\zeta(s)}\\
		&=\frac{1}{(s-1)^2}+\sum_{n=1}^\infty na_n(s-1)^{n-1}\\
		&=\frac{1}{(s-1)^2}\left(1+A(s)\right),
	\end{align*}
	where $A(s)$ is an analytic function in a open ball centered at $s=1$ and with radius $1$. Moreover, $A(s)=O(|s-1|^2)$ as $s\to1$, and hence, there exists a $\delta>0$ such that $|A(s)|$ does not exceed $1/2$ for all $s$ in an open ball $B$ of center $1$ and radius $\delta$.
	
	Thus, the function $\sqrt{1+A(s)}$ is analytic in this open ball $B$ and has power series representation
	$$\sqrt{1+A(s)}=1+\sum_{n=2}^\infty b_n(s-1)^n.$$
	The first index starting at $n=2$ above is justified by the fact that $A(s)=O(|s-1|^2)$.
	
	Therefore, since for real $1/2<\sigma\leq 1/2+\delta/2$
	$$\sqrt{\frac{d^2}{ds^2}\log \zeta(s)\bigg{|}_{s=2\sigma}}=\frac{1}{2\sigma-1}\sqrt{1+A(2\sigma)},$$ 
	and the power series converges absolutely, the integral of the sum is the sum of integrals:
	\begin{align*}
		\EE N(T,1/2+\delta/2)&=\frac{1}{\pi}\int_T^{1/2+\delta/2}\left(\frac{1}{(2\sigma-1)}+\sum_{n=1}^{\infty}b_{n+1}(2\sigma-1)^n\right)d\sigma\\
		&=\frac{1}{2\pi}\log\left(\frac{1}{T-1/2}\right)+c_0+\sum_{n=2}^{\infty}c_n(T-1/2)^n.
	\end{align*}
	
	Now we will show that $\EE N(1/2+\delta/2,\infty)$ is a constant. Indeed, for $\re(s)>1$ 
	$$\frac{\zeta'(s)}{\zeta(s)}=-\sum_{n=2}^\infty\frac{\Lambda(n)}{n^{s}},$$
	where $\Lambda(n)$ is the classical von Mangoldt function\footnote{The von Mangoldt function is defined as follows: If $n$ is the power of a prime, say $n=p^m$, then $\Lambda(n)=\log p$. If $n$ is not a prime power, then $\Lambda(n)=0$}. Therefore
	$$\frac{d}{ds}\frac{\zeta'(s)}{\zeta(s)}=\sum_{n=2}^\infty\frac{\Lambda(n)\log n}{n^{s}}.$$
	By the general theory of Dirichlet series, $\frac{d}{ds}\frac{\zeta'(s)}{\zeta(s)}$ is a continuous function and always positive in the real interval $[1+\delta,100]$, and hence $\EE N(1/2+\delta/2,100)$ is a real number. Let $L>100$. Since $\sqrt{a+b}\leq \sqrt{a}+\sqrt{b}$ for all $a,b\geq0$, and $0\leq \Lambda(n)\leq \log n$, 
	we have that
	\begin{align*}
		\int_{100}^L \sqrt{\sum_{n=2}^\infty \frac{\Lambda(n)\log n}{n^{2\sigma}}}d\sigma&\leq\int_{100}^L\sum_{n=2}^\infty \frac{\log n}{n^\sigma}d\sigma\\ &\leq\sum_{n=2}^\infty\log n\int_{100}^\infty\exp(-\sigma\log n)d\sigma
		= \sum_{n=2}^\infty\frac{1}{n^{100}}<\infty,
	\end{align*} 
	where the interchange between the integration and summation is justified by the fact that the Dirichlet series converges absolutely for $\sigma$ in the range $[100,L]$, for any large $L>100$. Therefore, the limit 
	$$\lim_{L\to\infty}\EE N(100,L)$$
	exists and is a real number. This completes the proof. \end{proof}

Now we are going to justify that the coefficients $c_n=p_n/\pi$, where $p_n$ is a polynomial with rational coefficients in the variables $(\gamma_n)_{n\geq 0}$.

Before doing that, we recall the following result from Complex Analysis that can easily be obtained from Theorem 3.4, pg. 66 of the book of Lang \cite{Lang}:
\begin{lemma}\label{Lemma composition of series} Let $f$ be analytic in a open ball centered at $w=g(a)$, where $g$ is an analytic function in a open ball centered at $a$. Suppose that
	$$f(z)=\sum_{n=0}^\infty b_n(z-w)^n,\;g(z)=\sum_{n=0}^\infty a_n(z-a)^n.$$
	Then, in a open ball centered at $z=a$, $f(g(z))$ can be represented by a convergent power series given by
	$$f(g(z))=\sum_{n=0}^\infty b_n\left(\sum_{m=0}^\infty a_m(z-a)^m -w\right)^n=\sum_{n=0}^\infty b_n\left(\sum_{m=1}^\infty a_m(z-a)^m \right)^n.$$
\end{lemma}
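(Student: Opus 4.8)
The plan is to reduce the statement to the standard theorem on substitution of one power series into another (the result of Lang quoted above), via an affine change of variables, and then to verify that the formal substitution both converges on a neighborhood of $a$ and actually represents $f(g(z))$ there.

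First I would center everything at the origin. Put $h(z):=g(z)-w=\sum_{m=1}^\infty a_m(z-a)^m$; this is analytic near $a$ and vanishes at $z=a$, since $a_0=g(a)=w$. Choose $R>0$ so that $f(w+t)=\sum_{n=0}^\infty b_n t^n$ converges absolutely for $|t|<R$, and let $r>0$ be a radius of convergence of the series $\sum_{m\geq 1}a_m(z-a)^m$ representing $h$. The map $\rho\mapsto\sum_{m=1}^\infty|a_m|\rho^m$ is continuous on $[0,r)$ (a power series in $\rho$ with nonnegative coefficients) and equals $0$ at $\rho=0$, so we may pick $\rho_0\in(0,r)$ with $\sum_{m=1}^\infty|a_m|\rho_0^m<R$; here the fact that $h$ has \emph{no constant term} is essential.

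Next I would establish absolute convergence of the double series on the closed ball $|z-a|\leq\rho_0$. For such $z$, expanding $\big(\sum_{m\geq 1}a_m(z-a)^m\big)^n$ into monomials in $(z-a)$ and replacing each coefficient by its absolute value is dominated, monomial by monomial, by the expansion of $\big(\sum_{m\geq 1}|a_m|\rho_0^m\big)^n$; hence
$$\sum_{n=0}^\infty|b_n|\Big(\sum_{m=1}^\infty|a_m|\,|z-a|^m\Big)^n\leq\sum_{n=0}^\infty|b_n|\Big(\sum_{m=1}^\infty|a_m|\rho_0^m\Big)^n<\infty.$$
By the substitution theorem quoted from \cite{Lang}, applied after the translations $z\mapsto z-a$ and $t\mapsto t+w$ (which bring $g$ into a series vanishing at the origin), the rearrangement of $\sum_{n}b_n\big(\sum_{m\geq 1}a_m(z-a)^m\big)^n$ into powers of $(z-a)$ is legitimate on this ball and defines a convergent power series there.

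Finally I would identify that sum with $f(g(z))$. Fix $z$ with $|z-a|<\rho_0$ and set $t:=h(z)=\sum_{m\geq 1}a_m(z-a)^m$; then $|t|\leq\sum_{m\geq1}|a_m|\rho_0^m<R$, so $f(g(z))=f(w+t)=\sum_{n=0}^\infty b_n t^n=\sum_{n=0}^\infty b_n\big(\sum_{m=1}^\infty a_m(z-a)^m\big)^n$, which is exactly the asserted formula; the middle expression in the statement, with $\sum_{m=0}^\infty a_m(z-a)^m-w$ in place of $\sum_{m\geq 1}a_m(z-a)^m$, agrees with it because $a_0=w$. The only step requiring real care is the absolute-convergence estimate of the previous paragraph, since that is what legitimizes treating the doubly-indexed sum as an honest power series in $(z-a)$; everything else is bookkeeping with the translations.
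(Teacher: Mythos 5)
Your proposal is correct and follows essentially the same route as the paper, which obtains the lemma from the power-series substitution theorem (Theorem 3.4, p.~66 of Lang \cite{Lang}); your extra work --- centering via the translations, choosing $\rho_0$ with $\sum_{m\geq 1}|a_m|\rho_0^m<R$ using that the inner series has no constant term, and the domination estimate justifying the rearrangement --- is exactly the standard verification behind that citation. Nothing is missing.
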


Working carefully the lemma above, we see that the final power series of the composition is obtained by formally expanding each inner power series at power $n$ and the resulting series is the sum over these expansions.

With that on hand, observe firstly that
\begin{align*}
	\frac{1}{\zeta(s)}&=\frac{s-1}{1-\sum_{n=1}^\infty (-1)^n\frac{\gamma_{n-1}}{(n-1)!}(s-1)^n}:=\frac{s-1}{1-w},
\end{align*}
where $w=w(s)$ is an analytic function at some open ball centered at $s=1$, and $w=O(|s-1|)$ as $s\to 1$. Using the Taylor expansion
$$\frac{1}{1-w}=\sum_{n=0}^\infty w^n,$$
we obtain that at some open ball centered at $s=1$, by Lemma \ref{Lemma composition of series}, $1/\zeta(s)$ around $s=1$ is described by a Taylor series whose coefficients are described by polynomials with rational coefficients in the variables $(\gamma_n)_{n\geq 0}$. The same is true for $\zeta'(s)/\zeta(s)$, since the product of two convergent power series in a ball are again a convergent power series in a, perhaps, smaller ball. And moreover
$$\left(\sum_{n=0}^\infty a_n s^n\right)\left(\sum_{n=0}^\infty b_n s^n\right)=\sum_{n=0}^\infty \left(\sum_{\substack{u,v\geq 0\\ u+v=n }}a_ub_v\right) s^n.$$

Now, $\frac{d}{ds} \zeta'(s)/\zeta(s) $ is described by a Laurent series whose coefficients are described by polynomials with rational coefficients in the variables $(\gamma_n)_{n\geq 0}$, and the same is true for $A(s)$ defined above and consequently for $\sqrt{1+A(s)}$, since the power series of $\sqrt{1+z}$ in the variable $z$ has rational coefficients. The last step was to 
integrate $\frac{1}{2\sigma-1}\sqrt{1+A(2\sigma)}$, and this keeps the target property. This justifies Remark~\ref{remark_principal}.

\subsection{Moment bounds}
The proof is based on the following inequality involving the number of zeros of an analytic function and its maximal value in circles: Let $F(s)$ be analytic in a domain containing the disc $|s|\leq R$, let $M$ be the maximal value of $|F|$ on this disc, and assume that $F(0)\neq 0$. Then, for $r<R$, the number of zeros of $F$ in the disc $|s|\leq r$ does not exceed 
\begin{equation}\label{equacao desigualdade zeros}
	\frac{\log (M/F(0))}{\log(R/r)}.
\end{equation}
A proof of this can be found at the book of Montgomery and Vaughan \cite{montgomerylivro}, pg. 168.

Now we begin the proof of Theorem \ref{theorem moments} with the following lemma concerning bounds of Dirichlet series at different points.
\begin{lemma}\label{lemma bounds d series} Let $F(s)=\sum_{n=1}^\infty X_nn^{-s}$ be a Dirichlet series convergent for $\re(s)>1/2$. Let $\re(s)>1/2$ and $\sigma_1\in(1/2,\re(s))$. Let $A(t)=\sum_{n\leq t}X_n n^{-\sigma_1}$. Then 
	$$|F(s)|\leq \frac{|s-\sigma_1|}{\re(s)-\sigma_1}\sup_{t\geq 1}|A(t)|.$$
\end{lemma}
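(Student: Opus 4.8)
The plan is to express $F(s)$ as an integral against the partial sums $A(t)$ using partial summation (Abel summation), and then estimate crudely. Concretely, write $F(s) = \sum_{n\geq 1} X_n n^{-s} = \sum_{n\geq 1} (X_n n^{-\sigma_1}) n^{-(s-\sigma_1)}$, so that $F(s)$ is the Dirichlet series with coefficients $X_n n^{-\sigma_1}$ evaluated at the point $s - \sigma_1$, whose real part $\re(s) - \sigma_1$ is strictly positive. Since $A(t) = \sum_{n\leq t} X_n n^{-\sigma_1}$ has a finite limit as $t\to\infty$ (because $F$ converges at the real point $\sigma_1 > 1/2$, or rather because the hypothesis guarantees convergence on $\re(s)>1/2$), the supremum $\sup_{t\geq 1}|A(t)|$ is finite, and Abel summation gives the representation
\begin{equation*}
	F(s) = (s-\sigma_1)\int_1^\infty A(t)\, t^{-(s-\sigma_1)-1}\, dt.
\end{equation*}
The boundary term at $t\to\infty$ vanishes because $t^{-(s-\sigma_1)}\to 0$ while $A(t)$ stays bounded, and the boundary term at $t=1$ is $A(1)\cdot 1^{-(s-\sigma_1)}$ minus itself, i.e. it cancels once one writes the sum as a Stieltjes integral starting just below $1$; the clean way is to note $\sum_{n\geq 1} a_n n^{-w} = w\int_1^\infty \left(\sum_{n\leq t} a_n\right) t^{-w-1}\,dt$ for $\re(w)>0$ whenever the partial sums are bounded, which is the standard integral representation.

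Next I would simply bound the integral in absolute value. Using $|A(t)|\leq \sup_{u\geq 1}|A(u)| =: S$ and $|t^{-(s-\sigma_1)-1}| = t^{-(\re(s)-\sigma_1)-1}$, we get
\begin{equation*}
	|F(s)| \leq |s-\sigma_1|\, S \int_1^\infty t^{-(\re(s)-\sigma_1)-1}\,dt = |s-\sigma_1|\, S \cdot \frac{1}{\re(s)-\sigma_1} = \frac{|s-\sigma_1|}{\re(s)-\sigma_1}\,\sup_{t\geq 1}|A(t)|,
\end{equation*}
which is exactly the claimed inequality.

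There is no real obstacle here; the only point requiring a little care is justifying the integral representation (equivalently, the vanishing of the boundary term at infinity and the interchange of summation with the integral defining the Mellin-type transform). This is classical for Dirichlet series with bounded partial sums — it is the same device used to prove that a Dirichlet series converges on a half-plane to the right of its abscissa of convergence — so I would either cite the general theory of Dirichlet series (as the paper already does elsewhere) or spell out the one-line partial summation $\sum_{n\leq N} a_n n^{-w} = A(N) N^{-w} + w\int_1^N A(t) t^{-w-1}\,dt$ and let $N\to\infty$, observing that $A(N)N^{-w}\to 0$ since $(A(N))$ is bounded (indeed convergent) and $\re(w) > 0$. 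The boundedness of $(A(t))_{t\geq 1}$ itself follows from the assumed convergence of $F$ at $\sigma_1$, since a convergent series has bounded partial sums.
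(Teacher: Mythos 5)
Your proof is correct and follows essentially the same route as the paper: write $F(s)=\sum_n (X_n n^{-\sigma_1})\,n^{-(s-\sigma_1)}$, convert to the integral representation $F(s)=(s-\sigma_1)\int_1^\infty A(t)\,t^{-(s-\sigma_1)-1}\,dt$ by partial summation (the paper phrases this as a Riemann--Stieltjes integral followed by integration by parts), and then bound the integrand by $\sup_{t\geq 1}|A(t)|\cdot t^{-(\re(s)-\sigma_1)-1}$. Your extra care with the boundary term at infinity and the boundedness of the partial sums is a fine, if implicit, part of the paper's argument as well.
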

\begin{proof}
	We have that
	$$F(s)=\sum_{n=1}^\infty \frac{X_n}{n^{\sigma_1}}\cdot\frac{1}{n^{s-\sigma_1}}.$$
	Now we write the sum above as a Riemann-Stieltjes integral:
	$$F(s)=\int_{1^-}^\infty \frac{1}{t^{s-\sigma_1}}dA(t).$$
	Using integration by parts, we reach:
	$$|F(s)|= \left|(s-\sigma_1)\int_{1}^\infty \frac{A(t)}{t^{s-\sigma_1+1}}dt\right|\leq \frac{|s-\sigma_1|}{\re(s)-\sigma_1}\sup_{t\geq 1}|A(t)|.$$
\end{proof}

Now we recall a classical inequality for sums of independent random variables:

\textit{Levy's maximal inequality}: Let $X_1,...,X_n$ be independent random variables. Then for all $t\geq0$
\begin{equation}\label{equation Levy finita}
	\PP\bigg{(}\max_{1\leq m \leq n}\bigg{|}\sum_{k=1}^m X_k \bigg{|}\geq t \bigg{)}\leq 3 \max_{1\leq m \leq n}\PP\bigg{(}\bigg{|}\sum_{k=1}^m X_k \bigg{|}\geq \frac{t}{3} \bigg{)}.
\end{equation}
A proof of this can be found in the nice book of Pe\~na and Giné, \cite{pena_and_gine_decoupling}, pg. 4. In what follows we will need an infinite version of the inequality above:

\begin{equation}\label{equation Levy maximal}
	\PP\bigg{(}\sup_{m\geq 1}\bigg{|}\sum_{k=1}^m X_k \bigg{|}\geq t \bigg{)}\leq 3 \sup_{m\geq 1}\PP\bigg{(}\bigg{|}\sum_{k=1}^m X_k \bigg{|}\geq \frac{t}{3} \bigg{)}.
\end{equation}
To obtain this, we observe that the event inside the probability in the left hand-side of \eqref{equation Levy finita} increases as $n\to\infty$ to the event in the left hand-side of \eqref{equation Levy maximal}. Therefore, by the continuity of probabilities, to obtain \eqref{equation Levy maximal} we only need to make the limit $n\to\infty$ in \eqref{equation Levy finita}. 

Another inequality we will need is the following: Let $x_1,...,x_R$ be positive real numbers. Then, for all $k\geq 1$
\begin{equation}\label{equation desigualdade media aritimetica}
	(x_1+...+x_R)^k\leq R^{k-1}(x_1^k+...+x_R^k).
\end{equation}
The proof of this inequality can be made by the following argument: Let $X$ be a random variable with uniform distribution over $\{x_1,...,x_R\}$. Then the above inequality is just the moment bound $\EE X \leq (\EE X^k)^{1/k}$.

We continue with the following:

\begin{lemma}\label{lemma esperanca log a k} Let $F(s)$ be our random Dirichlet series. Let $0<\delta\leq 1/2$, and $C_0$ be a circle with center $\sigma_0=1/2+5\delta/4$ and radius $\delta/4$. Let $C_1$ be a circle with same center $\sigma_0$ but with radius $\delta/2$.
	Let $M=\max_{s\in C_1}|F(s)|$. Then there exists a constant $C>0$ that does not depend on $\delta$ such that for all $k\geq 1$
	$$\EE \left|\log (M/F(\sigma_0))\right|^k \leq C^kk^k.$$
\end{lemma}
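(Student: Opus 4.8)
The plan is to bound the two pieces $\log M$ and $\log(1/|F(\sigma_0)|)$ separately, since $|\log(M/F(\sigma_0))| \le |\log M| + |\log|F(\sigma_0)||$ and then $(a+b)^k \le 2^{k-1}(a^k+b^k)$ reduces everything to estimating $\EE|\log M|^k$ and $\EE|\log|F(\sigma_0)||^k$. For the first of these, the key is Lemma~\ref{lemma bounds d series}: taking $\sigma_1 = 1/2 + \delta/2$ (so that $\sigma_1$ lies strictly to the left of the disc $C_1$, whose points have real part at least $1/2+5\delta/4 - \delta/2 = 1/2 + 3\delta/4$), we get $M \le \sup_{s\in C_1}\frac{|s-\sigma_1|}{\re(s)-\sigma_1}\sup_{t\ge 1}|A(t)|$ where $A(t) = \sum_{n\le t} X_n n^{-\sigma_1}$. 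On $C_1$ the geometric factor $\frac{|s-\sigma_1|}{\re(s)-\sigma_1}$ is bounded by an absolute constant (both numerator and denominator are comparable to $\delta$), so $M \ll S := \sup_{t\ge 1}|A(t)|$. Hence $\log M \le \log S + O(1)$, and $(\log M)^k \ll^k ((\log^+ S)^k + 1)$, so it suffices to control $\EE(\log^+ S)^k$.

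Next I would estimate the tail of $S$. Here $A(t)$ is a partial sum of independent mean-zero Gaussians with $\EE A(t)^2 = \sum_{n\le t} n^{-2\sigma_1} \le \zeta(2\sigma_1) =: V < \infty$; crucially, since $2\sigma_1 = 1 + \delta > 1$ and $\delta \le 1/2$, we have $V \le \zeta(1+\delta)\ll 1/\delta$, but for this lemma we only need $V$ finite — actually to get a constant independent of $\delta$ we must be slightly careful, and I would instead note $V \le \sum_{n=1}^\infty n^{-2\sigma_1} \le 1 + \int_1^\infty t^{-(1+\delta)}dt = 1 + 1/\delta$. Hmm — that blows up. So the cleaner route is: normalize by working with $\tilde A(t) = A(t)/\sqrt{V}$; each $\sum_{k=1}^m X_k n_k^{-\sigma_1}$ is Gaussian with variance $\le V$, so by Lévy's maximal inequality \eqref{equation Levy maximal} and the standard Gaussian tail, $\PP(S \ge \lambda) \le 3\sup_m \PP(|A(t_m)|\ge \lambda/3) \le 6\exp(-\lambda^2/(18V))$. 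Thus $S/\sqrt V$ is sub-Gaussian with absolute constants, giving $\EE(\log^+ S)^k \le \EE(\tfrac12\log V + \log^+(S/\sqrt V))^k \ll^k ((\log^+ V)^k + \EE(\log^+(S/\sqrt V))^k)$; the random part contributes $\ll^k k^{k/2}$ (moments of the log of a sub-Gaussian), and the deterministic part is $(\log^+(1+1/\delta))^k$. That last term still depends on $\delta$ and can be much larger than $k^k$ when $\delta$ is tiny — so this decomposition is too lossy, and the fix is to not separate $\sqrt V$ out at all but instead feed the raw tail bound $\PP(S\ge\lambda)\le 6\exp(-c\lambda^2/V)$ directly into $\EE(\log^+ S)^k = \int_0^\infty \PP(\log^+ S \ge u^{1/k})\,du$ only after first disposing of the $F(\sigma_0)$ term, which is where the genuine $\delta$-dependence must cancel.

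For the lower-tail piece, $F(\sigma_0) = \sum_n X_n n^{-\sigma_0}$ is itself Gaussian with variance $v_0 = \sum_n n^{-2\sigma_0} = \zeta(1 + 5\delta/2) \asymp 1/\delta$; so $|F(\sigma_0)|/\sqrt{v_0}$ is a standard half-normal, and $\EE|\log|F(\sigma_0)||^k \le \EE|\tfrac12\log v_0 + \log(|F(\sigma_0)|/\sqrt{v_0})|^k \ll^k (\log^+ v_0)^k + \EE|\log|Z||^k$ with $Z$ standard normal; the latter is $\ll^k k^k$ (the density of $|\log|Z||$ has exponential-type tails on one side and Gaussian on the other, so its $k$-th moment is $O(k)^k$). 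The point I missed above resolves here: when we combine, $\log(M/F(\sigma_0)) = \log M - \log F(\sigma_0)$ and the two deterministic drifts are $\tfrac12\log V$ and $-\tfrac12\log v_0$ with $V \asymp v_0 \asymp 1/\delta$, so $\tfrac12\log V + \tfrac12\log v_0$ does \emph{not} cancel — rather, the correct observation is that $M/|F(\sigma_0)| \ll S/|F(\sigma_0)|$ where $S = \sup_t|A(t)|$ with $A$ built from the \emph{same} $\sigma_1$-normalized tail, so one should compare $M$ and $F(\sigma_0)$ against a common scale before taking logs. Concretely: $S = \sqrt V\,\sup_t|\tilde A(t)|$ and $|F(\sigma_0)| = \sqrt{v_0}\,|\tilde F|$ with $\tilde F$ standard normal and $\sqrt V/\sqrt{v_0}$ bounded above and below by absolute constants (both sums are $\asymp 1/\delta$ with ratio bounded); therefore $M/|F(\sigma_0)| \ll \sup_t|\tilde A(t)| / |\tilde F|$ up to an absolute constant, and $\EE|\log(M/F(\sigma_0))|^k \ll^k \EE(\log^+\sup_t|\tilde A(t)|)^k + \EE|\log|\tilde F||^k + O^k(1)$, both of which are $\ll^k k^k$ by the sub-Gaussian / half-normal moment estimates above.

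The main obstacle, then, is exactly this cancellation of the $\delta$-dependent scaling: one must resist bounding $M$ and $1/|F(\sigma_0)|$ in isolation (each carries a $\log(1/\delta)$ that is not $O(k)$) and instead exploit that both quantities live on the same variance scale $\asymp 1/\delta$, so that their ratio is scale-free. Once that is set up, the remaining steps are routine: (i) verify $\sqrt{V}/\sqrt{v_0} \asymp 1$ with absolute constants (compare $\zeta(1+\delta)$ and $\zeta(1+5\delta/2)$, both $\sim 1/\delta$ wait — these need $\delta$ small, but for $\delta \in (0,1/2]$ the ratio stays in a fixed compact subinterval of $(0,\infty)$, which is all we need); (ii) apply Lévy's maximal inequality plus the Gaussian tail to get a sub-Gaussian tail for $\sup_t|\tilde A(t)|$ with absolute constants; (iii) convert sub-Gaussian / half-normal tails into the moment bounds $\EE(\log^+(\cdot))^k \ll^k k^k$ via $\EE Y^k = \int_0^\infty k u^{k-1}\PP(Y>u)\,du$ and Stirling; (iv) assemble with $(a+b+c)^k \le 3^{k-1}(a^k+b^k+c^k)$.
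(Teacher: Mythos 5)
Your final argument is correct and is essentially the paper's own proof: you bound $M$ by $\sup_{t\ge1}|A(t)|$ with $\sigma_1=1/2+\delta/2$ via Lemma \ref{lemma bounds d series}, apply L\'evy's maximal inequality together with Gaussian tail and small-ball estimates, and --- exactly as the paper does --- normalize $M$ and $F(\sigma_0)$ by the common scale $\asymp\delta^{-1/2}$ so that the ratio is $\delta$-free, finishing by turning tail bounds into $k$-th moments (your layer-cake integral versus the paper's triadic sums is an immaterial difference). The only step to state explicitly is that $M\ge|F(\sigma_0)|$ by the maximum modulus principle, so $\log(M/|F(\sigma_0)|)\ge0$ and your one-sided bound $M/|F(\sigma_0)|\ll\sup_t|\tilde A(t)|/|\tilde F|$ already controls the absolute value; the paper instead handles the possibility of small $M$ by lower-bounding $M\ge|F(u)|$ at the leftmost point $u$ of $C_1$ and using a small-ball estimate there.
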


\begin{proof}
	Let $\sigma_1=1/2+\delta/2$. The maximal value that $|s-\sigma_1|$ can attain for $s\in C_1$ is below $2 \delta$, and the minimal value that $\re(s)-\sigma_1$ can attain is $\delta/4$. Therefore, by Lemma \ref{lemma bounds d series} we have that almost surely
	\begin{equation}\label{equation inequality for M}
		M\leq 8 \sup_{t\geq 1}|A(t)|,
	\end{equation}
	where $A(t)=\sum_{n\leq t}X_n n^{-\sigma_1}$.

	Observe that $F(\sigma_1)$ has variance $\sim \frac{1}{\delta}$, and $F(\sigma_0)$ has variance $\sim \frac{2}{5\delta}$.
	Therefore
	\begin{align*}
		\left|\log (M/F(\sigma_0))\right|^k&=\left|\log \left(\frac{\sqrt{\delta} M}{\sqrt{5\delta} F(\sigma_0)/\sqrt{2}}\right)+\log(\sqrt{5/2})\right|^k\\
		&\ll 3^{k-1}(|\log(\sqrt{\delta} M)|^k+|\log(\sqrt{5\delta} F(\sigma_0)/\sqrt{2})|^k+\log^k(\sqrt{5/2})),
	\end{align*}
	where we used the inequality \eqref{equation desigualdade media aritimetica} in the last step above.
	
	Now we will estimate $\EE|\log(\sqrt{\delta} M)|^k$. The estimate of 
	$\EE|\log(\sqrt{5\delta} F(\sigma_0)/\sqrt{2}|^k$ can be done analogously. We split $\sqrt{\delta} M=\sqrt{\delta} M\ind_{[0\leq \sqrt{\delta} M <1]}+\sqrt{\delta} M\ind_{[\sqrt{\delta} M\geq 1]}$.
	
	\noindent \textit{Estimate for} $\EE |\log (\sqrt{\delta} M)|^k\ind_{[\sqrt{\delta} M \geq 1]}$. We split $\ind_{[\sqrt{\delta} M \geq 1]}=\sum_{n=1}^\infty\ind_{[3^{n-1}\leq \sqrt{\delta} M < 3^n]}$. In the event $[3^{n-1}\leq \sqrt{\delta} M \leq 3^n]$, we have that $|\log (\sqrt{\delta} M)|^k\leq (n\log 3)^k$. Further, by inequality \eqref{equation inequality for M}, we have that
	$$\PP\left(3^{n-1}\leq \sqrt{\delta} M \leq 3^n\right)\leq\PP\left(\sqrt{\delta} M\geq 3^{n-1}\right)\leq \PP\left(8 \sup_{t\geq 1}\sqrt{\delta} |A(t)|\geq 3^{n-1}\right).$$
	By Levy's maximal inequality \eqref{equation Levy maximal} applied for $A(t)$, we reach
\begin{align*}
\PP\left(3^{n-1}\leq \sqrt{\delta} M \leq 3^n\right)&\leq 3\sup_{t\geq 1}\PP\left(|\sqrt{\delta} A(t)|\geq 3^{n-4}\right) \\
&= 3\PP\left(|\sqrt{\delta} F(\sigma_1)|\geq 3^{n-4}\right),
\end{align*}
where the equality above is justified by the fact that for each $t\geq 1$, $A(t)$ has Gaussian distribution with mean $0$ and with variance increasing to the variance of $F(\sigma_1)$ as $t\to\infty$. But now we recall that $\sqrt{\delta} F(\sigma_1)$ has Gaussian distribution with variance tending to $1$ as $\delta\to0^+$, and certainly above a small fixed $c>0$ and below a large fixed $d>0$, for all $0<\delta\leq 1/2$. Hence, we are allowed to use a rough bound 
	$\PP\left(3^{n-1}\leq \sqrt{\delta} M \leq 3^n\right)\ll 3^{-n}$. Therefore
	$$\EE |\log (\sqrt{\delta} M)|^k\ind_{[\sqrt{\delta} M \geq 1]}\ll \sum_{n=1}^{\infty}\frac{n^k}{3^n}=\sum_{n=1}^{\infty}\frac{n^ke^{-n}}{(3/e)^n}\ll \sup_n n^ke^{-n}\leq k^ke^{-k},$$
	where in the last step above we just used elementary tools from calculus.
	
	\noindent \textit{Estimate for} $\EE |\log (\sqrt{\delta} M)|^k\ind_{[0\leq \sqrt{\delta} M\leq 1]}$. We begin by observing that the if $M=0$, then by analyticity $F(s)=0$ everywhere and hence that all Gaussians $(X_n)_{n\geq 1}$ are identically $0$, and this happens with probability $0$. So we can work with $\EE |\log (\sqrt{\delta} M)|^k\ind_{[0< \sqrt{\delta} M \leq 1]}$. As before, we split 
	$\ind_{[0<\sqrt{\delta} M \leq 1]}=\sum_{n=1}^\infty\ind_{[3^{-n}< \sqrt{\delta} M \leq 3^{-n+1}]}.$
	In the event $[3^{-n}< \sqrt{\delta} M \leq 3^{-n+1}]$, we have that 
	$$|\log (\sqrt{\delta} M)|^k\leq (n\log 3)^k.$$ Let $u=1/2+3\delta/4$ be the leftmost point of the circle $C_1$. We thus have
	$$\PP\left(3^{-n}< \sqrt{\delta} M \leq 3^{-n+1}\right)\leq\PP\left(\sqrt{\delta} M \leq 3^{-n+1}\right)\leq \PP\left(|\sqrt{\delta} F(u)| \leq 3^{-n+1}\right).$$
	Now observe that $\sqrt{\delta} F(u)$ is Gaussian with variance bounded below by some $c>0$ and above by some $d>0$ in the interval $0<\delta\leq 1/2$, and hence, for sufficiently small $\epsilon>0$, 
	$$\PP(|\sqrt{\delta} F(u)|\leq \epsilon)\ll \epsilon.$$
	Thus, as in the previous case above
	$$\EE |\log (\sqrt{\delta} M)|^k\ind_{[0\leq \sqrt{\delta} M \leq 1]}\ll \sum_{n=1}^{\infty}\frac{n^k}{3^n} \ll k^ke^{-k}, $$
	and this completes the proof of the lemma.
\end{proof}

We are ready to:
\begin{proof}[Proof of Theorem \ref{theorem moments}] We let $T=1/2+\delta$ for some $0<\delta<1/2$. Let $C_0^{(0)}$ be a circle with center $\sigma_0=1/2+5\delta/4$ and radius $\delta/4$. Let $C_1^{(0)}$ be a circle with same center $\sigma_0$ but with radius $\delta/2$. Call the rightmost point of $C_0^{(0)}$ by $T_1=1/2+3\delta/2$. Isolating $\delta$ in terms of $T$, we can write $T_1=1/2+(T-1/2)(3/2)$. Define $\delta_1=T_1-1/2$, $C_0^{(1)}$ the circle with leftmost point at $s=T_1$ and with radius $\delta_1/4$. The rightmost point of $C_0^{(1)}$ is $T_2=1/2+(T-1/2)(3/2)^2$. Define $C_1^{(1)}$ to be a circle with same center as $C_0^{(1)}$ but with radius $\delta_1/2$. By continuing this process, we obtain a sequence of pairs of concentric circles $C_0^{(n)}$ and $C_1^{(n)}$ with same proportions of $C_0^{(0)}$ and $C_1^{(0)}$ so that Lemma \ref{lemma esperanca log a k} can be used in each one of this pair of circles.
	
	Set $T_0=T$. The number of zeros $N(T,1)$ can be split as 
	$$N(T,1)\leq N(T_0,T_1)+N(T_1,T_2)+N(T_2,T_3)+...+N(T_{R-1},T_R),$$
	where $R$ is defined as the smallest positive integer $n$ so that $T_n\geq 1$. Since $T_n=1/2+(T-1/2)(3/2)^n$, we have that $R\ll \log(1/(T-1/2))$.
	
	By inequality \eqref{equation desigualdade media aritimetica}, we have that
	$$N(T,1)^k\leq R^{k-1}\sum_{n=0}^{R-1}N^k(T_{n},T_{n+1}).$$
	Say that the center of $C_{0}^{(n)}$ is $\sigma_0^{(n)}$ and that $M_n=\max_{s\in C_1^{(n)}}|F(s)|$. Using the inequality \eqref{equacao desigualdade zeros}, we see that
	$$N(T_n,T_{n+1})\ll \log(M_n/F(\sigma_0^{(n)})).$$
	Thus combining \eqref{equation desigualdade media aritimetica} with Lemma \ref{lemma esperanca log a k}, we obtain constants $C,D>0$ such that
	$$\EE N(T,1)^k\leq R^{k-1}\sum_{n=0}^{R-1}C^k k^k\leq C^kk^kR^k\leq D^kk^k\log^k(1/(T-1/2)),$$
	and this finishes the proof. \end{proof}

\subsection{Almost sure bounds}

We begin with the
\begin{proof}[Proof of the upper bound] Let $T_n=1/2+1/2^n$. 
	
We use Corollary \ref{corollary exponential tails} with $\lambda_n=D\log\log(1/(T_n-1/2))$, for some constant $D>0$:
	$$\PP(N(T_n,1)\geq \lambda_n \log(1/(T_n-1/2)))\ll \frac{1}{n^{Dc}}.$$
	So, if $Dc>1$, the probabilities above are summable and hence the Borel-Cantelli Lemma is applicable, that is, almost surely for all $n$ sufficiently large
	$$\frac{N(T_n,1)}{D\log(1/(T_n-1/2)))\log \log(1/(T_n-1/2)))}\leq 1.$$
	Observe that $N(T)$ is non-decreasing as $T\to1/2^+$. Now, for $T_{n}\leq T \leq T_{n-1}$, we have that 
	\begin{align*}
		&\frac{N(T,1)}{D\log(1/(T-1/2)))\log \log(1/(T-1/2)))}\\
		&\leq \frac{D\log(1/(T_n-1/2)))\log \log(1/(T_n-1/2)))}{D\log(1/(T_{n-1}-1/2)))\log \log(1/(T_{n-1}-1/2)))}\\
		&\leq 4.
	\end{align*}
	To pass the above estimate to $N(T,\infty)$, we just observe that any Dirichlet series has a half plane inside its region of absolute convergence in which there are no zeros. A proof of this can be found at the book of Apostol \cite{apostol} pg. 227. In our case, our random Dirichlet series converges absolutely and almost surely in the half plane $\re(s)>1$. To complete the argument, we see that since a Dirichlet series is an analytic function, it cannot have and infinite number of real zeros between $s=1$ and this half plane where it does not vanishes, unless that it vanishes identically, which is not the case. \end{proof}

Now we continue with the proof of Theorem \ref{theorem almost sure}\\
\noindent \textit{The lower bound.} The proof of the lower bound will be divided in some steps. Our idea is to consider the following quantities: Let $\sigma_n=1/2+1/2^n$ and define
		\begin{align*}
			S_{+}(R)&=\sum_{n\leq R}\ind_{[F(\sigma_n)>0]},\\
			S_{-}(R)&=\sum_{n\leq R}\ind_{[F(\sigma_n)<0]}.
	\end{align*}
Thus $S_{+}(R)$ counts the number of times that $F$ is positive along the sequence $\sigma_n$, and $S_{-}(R)$ the number of times that $F$ is negative along the same sequence.

In what follows, we will prove a quantitative Law of large numbers for $S_{+}(R)$ and for $S_{-}(R)$. We need firstly the following result.

\begin{lemma}\label{lemma calculo da correlacao} Let $F(\sigma)$ be our random Dirichlet series and $\sigma_n=2^{-1}+2^{-n}$. Then there exists a constant $C>0$ such that
	$$|\corr(F(\sigma_k),F(\sigma_l))|\leq \frac{C}{\sqrt{2}^{|k-l|}}.$$
\end{lemma}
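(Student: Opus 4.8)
The plan is to express $\corr(F(\sigma_k),F(\sigma_l))$ exactly through the Riemann zeta function and then to estimate the resulting expression by means of the classical two-sided bound for $\zeta$ near its pole.

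First I would record that, since $\sum_{n\ge1}n^{-2\sigma}<\infty$ for every real $\sigma>1/2$, the partial sums $F_N(\sigma)=\sum_{n\le N}X_nn^{-\sigma}$ converge in $L^2$ to $F(\sigma)$; together with the fact that $\EE[X_mX_n]$ equals $1$ when $m=n$ and $0$ otherwise, and with continuity of the $L^2$ inner product, this yields, for any $\sigma,\tau>1/2$ with $\sigma+\tau>1$,
\[
\EE\big[F(\sigma)F(\tau)\big]=\lim_{N\to\infty}\sum_{n\le N}\frac{1}{n^{\sigma+\tau}}=\zeta(\sigma+\tau).
\]
In particular $F(\sigma_k)$ has mean $0$ and variance $\zeta(2\sigma_k)$; here $2\sigma_k=1+2^{1-k}>1$ and $\sigma_k+\sigma_l=1+2^{-k}+2^{-l}>1$, so every series in sight converges. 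Hence
\[
\corr(F(\sigma_k),F(\sigma_l))=\frac{\zeta(\sigma_k+\sigma_l)}{\sqrt{\zeta(2\sigma_k)\,\zeta(2\sigma_l)}}\,,
\]
which is positive, so the absolute value in the statement plays no role.

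Next I would invoke the elementary inequalities $\frac{1}{s-1}\le\zeta(s)\le\frac{1}{s-1}+1$, valid for all real $s>1$ — these follow at once from $\zeta(s)=\frac{s}{s-1}-s\int_1^\infty\{x\}x^{-s-1}\,dx$ and $0\le s\int_1^\infty\{x\}x^{-s-1}\,dx\le1$. With $\sigma_n=\frac12+2^{-n}$ they give $\zeta(2\sigma_k)\ge2^{k-1}$, $\zeta(2\sigma_l)\ge2^{l-1}$ and $\zeta(\sigma_k+\sigma_l)\le\frac{1}{2^{-k}+2^{-l}}+1$, so that
\[
\corr(F(\sigma_k),F(\sigma_l))\le\Big(\frac{1}{2^{-k}+2^{-l}}+1\Big)\,2\cdot2^{-(k+l)/2}
=\frac{2\cdot2^{-(k+l)/2}}{2^{-k}+2^{-l}}+2\cdot2^{-(k+l)/2}.
\]
Assuming $k\le l$ without loss of generality and writing $m=l-k=|k-l|$, the first summand equals $\frac{2\cdot2^{-m/2}}{1+2^{-m}}\le2\cdot2^{-m/2}$ and the second is $\le2\cdot2^{-m/2}$ since $k+l\ge l-k=m$ (the indices being nonnegative). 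Adding the two, $\corr(F(\sigma_k),F(\sigma_l))\le4\cdot2^{-|k-l|/2}$, which is the claim with $C=4$.

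I do not anticipate a genuine obstacle here. The two points deserving a line of care are: the term-by-term evaluation of the second moments of $F$, which must be justified through $L^2$-convergence of the Dirichlet series on $\{\re(s)>1/2\}$ and not through absolute convergence, since $\zeta(\sigma_k)$ may itself diverge when $\sigma_k<1$; and the uniform two-sided estimate for $\zeta$ at its pole, for which the integral representation above is the cleanest source. One could alternatively extract $\zeta(s)\sim\frac1{s-1}$ as $s\to1^+$ from the Laurent expansion \eqref{equation Laurent expansion}, but that only controls large $k$ and $l$ and would then require separately absorbing the finitely many pairs with bounded $\min(k,l)$ into the constant via $\corr\le1$; working with the uniform inequality sidesteps this.
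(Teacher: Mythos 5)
Your proof is correct and follows essentially the same route as the paper: express the covariance as $\zeta(\sigma_k+\sigma_l)$ and the variances as $\zeta(2\sigma_k)$, $\zeta(2\sigma_l)$, then use the behaviour of $\zeta$ at its pole to get the $2^{-|k-l|/2}$ decay. The only difference is that you replace the paper's asymptotic equivalences $\zeta(s)\sim 1/(s-1)$ by the uniform two-sided bound $\tfrac{1}{s-1}\le\zeta(s)\le\tfrac{1}{s-1}+1$, which makes the estimate explicit for all $k,l$ rather than only asymptotically.
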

\begin{proof}
	We have that
	$$\EE F(\sigma_k)F(\sigma_l)=\sum_{n=1}^\infty\frac{1}{n^{\sigma_k+\sigma_l}}=\zeta(\sigma_k+\sigma_l)\sim \frac{1}{2^{-k}+2^{-l}}.$$
	On the other hand, 
	$$\EE F(\sigma)^2=\zeta(2\sigma)\sim\frac{1}{2\sigma-1}.$$
	Therefore
	$$\corr(F(\sigma_k),F(\sigma_l))\sim\frac{\sqrt{2^{-k-l+2}}}{2^{-k}+2^{-l}}=\frac{2}{2^{(l-k)/2}+2^{(k-l)/2}}\ll \frac{1}{\sqrt{2}^{|k-l|}}.$$
\end{proof}
\begin{lemma}\label{lemma correlacao de eventos} Let $F(\sigma)$ be our random Dirichlet series and $\sigma_n=2^{-1}+2^{-n}$. There exists a constant $C>0$ such that for all $k$ and $l$
	$$|\corr(\ind_{[F(\sigma_k)>0]},\ind_{[F(\sigma_l)>0]})|\leq \frac{C}{\sqrt{2}^{|k-l|}},$$
and 
	$$|\corr(\ind_{[F(\sigma_k)<0]},\ind_{[F(\sigma_l)<0]})|\leq \frac{C}{\sqrt{2}^{|k-l|}}.$$
\end{lemma}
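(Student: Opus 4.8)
The plan is to reduce the statement about correlations of the indicator events to the bivariate Gaussian correlation inequality, using Lemma \ref{lemma calculo da correlacao} as the input on the correlation of the underlying Gaussian vector. First I would note that $(F(\sigma_k),F(\sigma_l))$ is a centered jointly Gaussian pair, so after normalizing it is distributed as $(Z_1,Z_2)$ with $Z_1,Z_2$ standard Gaussian and $\EE Z_1 Z_2 = \rho$, where $\rho = \corr(F(\sigma_k),F(\sigma_l))$. Since $F(\sigma)>0$ if and only if the normalized variable is positive, we have $\PP(F(\sigma_k)>0) = \PP(F(\sigma_l)>0) = 1/2$, and $\PP(F(\sigma_k)>0,\,F(\sigma_l)>0) = \PP(Z_1>0,Z_2>0)$. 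The classical orthant formula gives $\PP(Z_1>0,Z_2>0) = \frac14 + \frac{1}{2\pi}\arcsin\rho$, hence
\begin{equation*}
	\corr(\ind_{[F(\sigma_k)>0]},\ind_{[F(\sigma_l)>0]}) = \frac{\PP(Z_1>0,Z_2>0) - 1/4}{1/4} = \frac{2}{\pi}\arcsin\rho.
\end{equation*}
The same computation applies verbatim to the events $[F(\sigma_k)<0]$ by symmetry of the Gaussian, giving the identical value.

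Next I would use the elementary bound $|\arcsin\rho| \leq \frac{\pi}{2}|\rho|$ valid for $|\rho|\leq 1$ (or simply $|\arcsin \rho|\ll |\rho|$), so that
\begin{equation*}
	|\corr(\ind_{[F(\sigma_k)>0]},\ind_{[F(\sigma_l)>0]})| = \frac{2}{\pi}|\arcsin\rho| \leq |\rho| = |\corr(F(\sigma_k),F(\sigma_l))|.
\end{equation*}
Then Lemma \ref{lemma calculo da correlacao} bounds the right-hand side by $C/\sqrt{2}^{|k-l|}$, which is exactly the claimed estimate (with the same constant $C$, or a harmless adjustment of it). The argument for the negative events is identical, so both displays in the lemma follow at once.

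One subtlety worth addressing explicitly is the degenerate case: Lemma \ref{lemma calculo da correlacao} only gives $|\rho|\leq C/\sqrt{2}^{|k-l|}$, and if $C>1$ this is vacuous for small $|k-l|$ — but there the claimed conclusion is also vacuous (a correlation is always at most $1$ in absolute value), so we may simply take the maximum of $C$ and $1$ in the final statement, or restrict attention to $|k-l|$ large enough that $C/\sqrt{2}^{|k-l|}<1$. Also, for $k=l$ the correlation is exactly $1$ on both sides, consistent with the bound. I do not anticipate a genuine obstacle here; the only thing to be careful about is invoking the orthant probability formula $\PP(Z_1>0,Z_2>0)=\tfrac14+\tfrac{1}{2\pi}\arcsin\rho$ correctly and making sure the normalization constants in the definition of correlation (variance of a Bernoulli$(1/2)$ is $1/4$) are handled cleanly. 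This is the step I would state most carefully, but it is classical and causes no difficulty.
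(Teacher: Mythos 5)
Your proof is correct and follows essentially the same route as the paper: both reduce to the bivariate Gaussian orthant probability (the paper derives it by rotation invariance, obtaining $\frac{2}{\pi}\tan^{-1}(\rho/\sqrt{1-\rho^2})$, which equals your $\frac{2}{\pi}\arcsin\rho$) and then bound this by a constant multiple of $|\rho|$ before invoking Lemma \ref{lemma calculo da correlacao}. Your use of $|\arcsin\rho|\leq\frac{\pi}{2}|\rho|$ and your explicit remark about the trivial regime where the bound exceeds $1$ are if anything slightly cleaner than the paper's $\tan^{-1}(x)\leq x$ step, but the argument is the same in substance.
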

\begin{proof} We begin by observing that $F(\sigma_k)$ and $F(\sigma_l)$ have joint Gaussian distribution. This can be verified by checking that any linear combination of them has a Gaussian distribution, see the book of Shiryaev \cite{shiryaev} pg. 301. In our case, for any real numbers $a$ and $b$:
	$$aF(\sigma_k)+bF(\sigma_l)=\sum_{n=1}^\infty X_n\left(\frac{a}{n^{\sigma_k}}+\frac{b}{n^{\sigma_l}}\right),$$
	and since $(X_n)_{n\in\NN}$ are i.i.d. standard Gaussians, we reach that the distribution of $aF(\sigma_k)+bF(\sigma_l)$ also is Gaussian.
	
	Now, a simple calculation shows that
	$$\corr(\ind_{[F(\sigma_k)>0]},\ind_{[F(\sigma_l)>0]})=4\PP(F(\sigma_k)>0,F(\sigma_l)>0)-1.$$
	Let $X=F(\sigma_k)/\sqrt{\var F(\sigma_k)}$ and $Y=F(\sigma_l)/\sqrt{\var F(\sigma_l)}$. Thus, the probability in the right-handside above is $\PP(X>0,Y>0)$. Observe that $X$ and $Y$ are standard Gaussians with correlation $\rho$, say. Let $Z$ be another standard Gaussian random variable independent of $X$. Thus $(X,Y)$ has the same distribution of $(X,\rho X+\sqrt{1-\rho^2}Z)$.
	With this we reach
	\begin{align*}
		\PP(X>0,Y>0)&=\PP\left(X>0,Z>-\frac{\rho}{\sqrt{1-\rho^2}}X\right)\\
		&=\frac{1}{4}+\PP\left(X>0,0>Z>-\frac{\rho}{\sqrt{1-\rho^2}}X\right)\\
		&:=\frac{1}{4}+f(\rho).
	\end{align*}
	Now we see that $f(\rho)$ is the probability that the pair $(X,Z)$ lies in a sector with angle 
	$$\theta=\tan^{-1}\left(\frac{\rho}{\sqrt{1-\rho^2}}\right).$$ 
	Since the distribution of $(X,Z)$ is invariant by rotations, we have that $f(\rho)=\theta/2\pi$.
	So our target correlation is
	$$\corr(\ind_{[F(\sigma_k)>0]},\ind_{[F(\sigma_l)>0]})=\frac{2}{\pi}\tan^{-1}\left(\frac{\rho}{\sqrt{1-\rho^2}}\right).$$

	Using the fact that $\tan^{-1}(x)=\int_{0}^x\frac{dt}{1+t^2}\leq x$ and Lemma \ref{lemma calculo da correlacao}, we complete the proof of the lemma in the first case. The second case can be done analogously. \end{proof}

\begin{lemma}\label{lemma law of large numbers} Let $S_{+}(R)$ and $S_{-}(R)$ be as above. Then, for all $\epsilon>0$
		$$S_{+}(R),S_{-}(R)=\frac{R}{2}+O_\epsilon(R^{1/2+\epsilon}), \,a.s.$$
\end{lemma}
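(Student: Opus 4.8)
The plan is to reduce everything to a single one-sided estimate. Since $F(\sigma_n)$ is a centered Gaussian it is symmetric, so $\EE\ind_{[F(\sigma_n)>0]}=\tfrac12$ and hence $\EE S_{+}(R)=\lfloor R\rfloor/2=R/2+O(1)$; moreover $\PP(F(\sigma_n)=0)=0$ for every $n$, so almost surely $S_{+}(R)+S_{-}(R)=\lfloor R\rfloor$ for all $R$. It therefore suffices to prove that, for the centered bounded variables $Y_n:=\ind_{[F(\sigma_n)>0]}-\tfrac12$,
\begin{equation*}
	\sum_{n\le R}Y_n=O_\epsilon\big(R^{1/2+\epsilon}\big)\qquad\text{a.s., for every }\epsilon>0,
\end{equation*}
after which the bound for $S_{-}(R)$ follows from $S_{-}(R)=\lfloor R\rfloor-S_{+}(R)$.

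The first step is a quasi-orthogonality estimate for the partial sums of $(Y_n)$. For $M\ge0$ and $N\ge1$, expanding the square and using $\EE Y_kY_l=\tfrac14\corr(\ind_{[F(\sigma_k)>0]},\ind_{[F(\sigma_l)>0]})$ together with Lemma~\ref{lemma correlacao de eventos},
\begin{equation*}
	\EE\Big(\sum_{n=M+1}^{M+N}Y_n\Big)^{2}\le\sum_{M<k,l\le M+N}\big|\EE Y_kY_l\big|\ll\sum_{M<k,l\le M+N}\frac{1}{\sqrt{2}^{\,|k-l|}}\ll N\sum_{d\ge0}\frac{1}{\sqrt{2}^{\,d}}\ll N .
\end{equation*}
So $(Y_n)$ is a quasi-orthogonal sequence with uniformly bounded variance density.

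The second step turns this into an almost sure rate. Under the window bound above, the Rademacher--Menshov maximal inequality for quasi-orthogonal systems (equivalently, the Gál--Koksma lemma) gives $\EE\max_{M<k\le M+N}\big(\sum_{n=M+1}^{k}Y_n\big)^{2}\ll N(\log 2N)^{2}$. Applying this on the dyadic blocks $2^{j}<R\le2^{j+1}$ and using Chebyshev's inequality, for fixed $\epsilon>0$ we get
\begin{equation*}
	\PP\Big(\big|{\textstyle\sum_{n\le2^{j}}}Y_n\big|>2^{j(1/2+\epsilon)}\Big)\ll 2^{-2j\epsilon},\qquad
	\PP\Big(\max_{2^{j}<k\le2^{j+1}}\big|{\textstyle\sum_{2^{j}<n\le k}}Y_n\big|>2^{j(1/2+\epsilon)}\Big)\ll j^{2}\,2^{-2j\epsilon},
\end{equation*}
and both right-hand sides are summable in $j$. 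By the Borel--Cantelli lemma, almost surely for all large $j$ one has $\big|\sum_{n\le2^{j}}Y_n\big|\le2^{j(1/2+\epsilon)}$ and $\max_{2^{j}<k\le2^{j+1}}\big|\sum_{2^{j}<n\le k}Y_n\big|\le2^{j(1/2+\epsilon)}$; for $2^{j}<R\le2^{j+1}$ this yields $\big|\sum_{n\le R}Y_n\big|\le2\cdot2^{j(1/2+\epsilon)}\ll R^{1/2+\epsilon}$. Intersecting over $\epsilon=1/m$, $m\in\NN$, gives the displayed a.s. bound, and hence the lemma.

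The main technical point is the maximal inequality in the second step: the $Y_n$ are genuinely correlated, so one cannot invoke orthogonality directly, but the window second-moment bound is precisely the hypothesis under which the classical dyadic (Rademacher--Menshov) argument still works and produces an extra logarithmic factor, which is harmless here. I expect that to be the part requiring the most care to state and reference correctly. It is worth noting that the softer route using only $\var S_{+}(R)\ll R$ together with Borel--Cantelli along a subsequence and the monotonicity of $S_{+}$ does not suffice: a geometric subsequence leaves gaps of size $\asymp R$ (too large for the monotonicity interpolation), while along a polynomial subsequence the Chebyshev bound $\ll R^{-2\epsilon}$ fails to be summable once $\epsilon\le\tfrac14$. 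Removing this obstruction is exactly what the maximal inequality — or, alternatively, a $2p$-th moment bound $\EE(S_{+}(R)-\EE S_{+}(R))^{2p}\ll_p R^{p}$ coming from the exponential mixing of the Gaussian sequence $(F(\sigma_n))$ — provides.
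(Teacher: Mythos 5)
Your proposal is correct, but it takes a genuinely different route from the paper's. The paper feeds the correlation decay of Lemma \ref{lemma correlacao de eventos} into a black-box theorem from Stout's book (almost sure convergence of $\sum_n (Y_n-\EE Y_n)/b_n$ when the correlation coefficients $\rho_k$ are summable and $\sum_n \var(Y_n)(\log n)^2 b_n^{-2}<\infty$), applied with $b_n=n^{1/2+\epsilon}$, and then converts convergence of the series into the partial-sum rate via Kronecker's lemma; it runs this separately for the positive and negative indicators, which is why Lemma \ref{lemma correlacao de eventos} is stated for both families. You instead prove the rate by hand: the same correlation bound gives the window estimate $\EE\big(\sum_{M<n\le M+N}Y_n\big)^2\ll N$, a Rademacher--Menshov-type maximal inequality upgrades this to $\ll N(\log 2N)^2$ for the running maximum, and dyadic Chebyshev plus Borel--Cantelli finishes; your reduction $S_-(R)=\lfloor R\rfloor-S_+(R)$ (valid since each $F(\sigma_n)$ is a nondegenerate Gaussian, hence a.s.\ nonzero) lets you handle only the positive indicators, and the symmetry argument for $\EE S_+(R)=\lfloor R\rfloor/2$ matches the paper's implicit use of the same fact. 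Both proofs hinge on exactly the same two ingredients (mean $1/2$ by symmetry and geometric correlation decay), and the Stout theorem the paper cites is itself proved by Rademacher--Menshov-type machinery, so your argument is essentially an unpacking of the paper's black box: longer, but self-contained except for the maximal inequality. One point of precision there: the classical Rademacher--Menshov inequality is stated for orthogonal variables, so you should cite the M\'oricz--Serfling form, valid under a superadditive bound on window second moments (your bound $g(M,N)\ll N$ qualifies), or else invoke the G\'al--Koksma theorem directly, which gives $\sum_{n\le R}Y_n=O\big(R^{1/2}(\log R)^{3/2+\epsilon}\big)$ a.s.\ and makes the dyadic block step unnecessary.
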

\begin{proof}
	The proof of this lemma is a direct application of a result in Probability theory for weak dependencies that says the following: Let $Y_n$ be a sequence of square-integrable random variables such that
	$$\sup_{n\geq 1}|\corr(Y_n,Y_{n+k})|\leq \rho_k,$$
	for some constants $\rho_k$ such that 
	\begin{equation}\label{equation soma rho_k}
		\sum_{k=1}^{\infty}\rho_k<\infty.
	\end{equation}
	If for some increasing sequence of positive real numbers $b_n$ we have that
	\begin{equation}\label{equation soma variancias}
		\sum_{n=1}^\infty\frac{\var(Y_n)(\log n)^2}{b_n^2}<\infty,
	\end{equation}
	then the series 
	$$\sum_{n=1}^\infty\frac{Y_n-\EE Y_n}{b_n}$$
	converges almost surely.
	
	A proof of this can be found at the book of Stout \cite{stout}, pg. 28.
	
In our case, we apply the result above for $Y_n=\ind_{[F(\sigma_n)>0]}$, and $Y_n=\ind_{[F(\sigma_n)<0]}$.  We have that \eqref{equation soma rho_k} is satisfied by Lemma \ref{lemma correlacao de eventos}. Now we take $b_n=n^{1/2+\epsilon}$. Since $0\leq Y_n\leq 1$, \eqref{equation soma variancias} also is satisfied. Thus we get convergence of the series
$$\sum_{n=1}^\infty\frac{\ind_{[F(\sigma_n)>0]}-1/2}{n^{1/2+\epsilon}},\,\sum_{n=1}^\infty\frac{\ind_{[F(\sigma_n)<0]}-1/2}{n^{1/2+\epsilon}}.$$
Now we recall a particular case of Kroeneckers' Lemma (see the book of Shiryaev \cite{shiryaev}, pg. 390): For any sequence of real numbers $(a_n)_n$ and $\sigma>0$, if the series $\sum_{n=1}^\infty a_n n^{-\sigma}$ converges, then the partial sums $\sum_{n\leq x}a_n=o(x^\sigma)$. By applying this result to the random variable $a_n=\ind_{[F(\sigma_n)>0]}-1/2$ or $a_n=\ind_{[F(\sigma_n)<0]}-1/2$, we obtain the target result.
\end{proof}
We are ready to the
\begin{proof}[Proof of the Lower bound]
	We see that if both $S_{+}(R+V)-S_{+}(R)\geq 1$ and $S_{-}(R+V)-S_{-}(R)\geq 1$, then $F(\sigma)$ has at least one sign change in the interval $[\sigma_{R+V},\sigma_R]$, and consequently has a zero in this interval.
	
	By Lemma \ref{lemma law of large numbers}
	\begin{align*}
		S_{+}(R+V)-S_{+}(R)&=\frac{R+V}{2}+O((R+V)^{1/2+\epsilon})-\left(\frac{R}{2}+O(R^{1/2+\epsilon})\right)\\
		&=\frac{V}{2}+O((R+V)^{1/2+\epsilon}),
\end{align*}
	and the same holds for $S_{-}$.
	
	In order to maximize the counting of number of zeros we need $V$ as small as possible. But the equality above says that to guarantee a zero in the interval $[\sigma_{R+V}, \sigma_R]$ we need $V$ a bit larger than $O_\epsilon((R+V)^{1/2+\epsilon})$, where the implicit constant in this $O_{\epsilon}$ term might depend in $\epsilon$ and could be random. So, we seek for a sequence $R_n$ such that, if $\epsilon>0$ is small, then $(R_{n+1}-R_n)/R_{n+1}^{1/2+\epsilon}\to\infty$.
	
	Indeed such property is satisfied by choosing $R_n=[n^{2+8\epsilon}]$ (here $[x]$ stands for the integer part of $x$), since by the mean value theorem for differentiable functions, for some $n\leq \theta_n\leq n+1$,
	$$R_{n+1}-R_n=(2+8\epsilon)\theta_n^{1+8\epsilon},$$
	and then
$$\frac{R_{n+1}-R_n}{R_{n+1}^{1/2+\epsilon}}\sim (2+8\epsilon)n^{2\epsilon-8\epsilon^2}.$$

	So we showed that almost surely for all $n$ sufficiently large, there is a zero of $F(\sigma)$ in the interval $[\sigma_{R_{n+1}},\sigma_{R_{n}}]$.
	The number of these subintervals has size proportional to $R^{1/2-\epsilon'}$ for some new small $\epsilon'>0$. Indeed, if $n$ is the largest positive integer $k$ such that $R_k\leq R$, then $n\sim R^{1/(2+8\epsilon)}$.
	
	Thus, for any $\epsilon>0$, almost surely for all $R$ sufficiently large there is at least $(1+o(1))R^{1/2-\epsilon}$ zeros of $F(\sigma)$ in the interval $[1/2+1/2^R,1]$ and hence at least a quantity $\gg (\log(1/(T-1/2)))^{1/2-\epsilon}$ of zeros in the interval $[T,1]$.  \end{proof}

\subsection{More general random Dirichlet series}

\begin{proof}[Proof of Theorem \ref{teorema alpha}] Just as in Theorem \ref{teorema principal}, we have that
	$$\EE N_\alpha(T,U)=\frac{1}{\pi}\int_{T}^U\sqrt{\frac{\zeta_{\alpha}''(2\sigma)}{\zeta_\alpha(2\sigma)}-\left(\frac{\zeta_\alpha'(2\sigma)}{\zeta_\alpha(2\sigma)}\right)^2}d\sigma.$$
	
	Thus, we need to estimate, as $\sigma\to1/2^+$, quantities of the form
	$$\zeta_{\alpha}^{(\beta)}(2\sigma)=\sum_{p}\frac{(\log p)^\beta}{p^{2\sigma}}=\int_{2}^\infty \frac{(\log x)^\beta}{x^{2\sigma}}d\pi(x)+O(1),$$
	where $\beta=0,1,2$ and the last integral above is in the Riemann-Stieltjes sense. Integration by parts gives, as $\sigma\to{1/2^+}$:
	$$\zeta_{\alpha}^{(\beta)}(2\sigma)=(2\sigma+o(1))\int_{2}^\infty \frac{\pi(x)(\log x)^\beta}{x^{2\sigma+1}}dx=(1+o(1))\int_{2}^\infty \frac{(\log x)^{\alpha+\beta}}{x^{2\sigma}}dx.$$
	\begin{lemma}\label{lemma integral J} Let $\gamma$ be a real number.
		Then, as $\sigma\to1/2 ^+$: 
		$$J(\gamma,\sigma):=\int_{2}^\infty \frac{(\log x)^{\gamma}}{x^{2\sigma}}dx=(1+o(1))\begin{cases}& \frac{\Gamma(\gamma+1)}{(2\sigma-1)^{\gamma+1}}, \gamma>-1\\ & \log\left(\frac{1}{\sigma-1/2}\right), \gamma=-1 \\ & c(\gamma), \gamma<-1,  \end{cases},$$
		where $\Gamma$ is the classical Euler's Gamma function, and $c(\gamma)$ is a constant that depends on $\gamma$.
	\end{lemma}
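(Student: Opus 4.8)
The plan is to reduce $J(\gamma,\sigma)$ to an incomplete Gamma integral via the substitution $x=e^{u}$, which gives, with $\epsilon:=2\sigma-1\to 0^{+}$,
$$J(\gamma,\sigma)=\int_{\log 2}^{\infty}u^{\gamma}e^{-\epsilon u}\,du,$$
so that the whole lemma becomes a statement about this one integral in the three regimes $\gamma>-1$, $\gamma=-1$, $\gamma<-1$. For $\gamma>-1$ I would rescale once more by $v=\epsilon u$ to get
$$J(\gamma,\sigma)=\epsilon^{-\gamma-1}\int_{\epsilon\log 2}^{\infty}v^{\gamma}e^{-v}\,dv=\epsilon^{-\gamma-1}\Bigl(\Gamma(\gamma+1)-\int_{0}^{\epsilon\log 2}v^{\gamma}e^{-v}\,dv\Bigr),$$
and since $\gamma>-1$ the error integral is at most $(\epsilon\log 2)^{\gamma+1}/(\gamma+1)=o(1)$, so $J(\gamma,\sigma)=(1+o(1))\Gamma(\gamma+1)(2\sigma-1)^{-\gamma-1}$, as required.

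For $\gamma<-1$ I would argue by dominated convergence: the integrand obeys $0\le u^{\gamma}e^{-\epsilon u}\le u^{\gamma}$ with $u^{\gamma}$ integrable on $[\log 2,\infty)$, so $J(\gamma,\sigma)\to\int_{\log 2}^{\infty}u^{\gamma}\,du=:c(\gamma)\in(0,\infty)$, which is exactly the claimed constant. The borderline case $\gamma=-1$ is the only one requiring a little care, since the integral then diverges logarithmically as $\epsilon\to 0^{+}$. Here I would split $\int_{\log 2}^{\infty}=\int_{\log 2}^{1}+\int_{1}^{\infty}$: the first piece lies between $0$ and $\log(1/\log 2)$ and is therefore $O(1)$, while the substitution $v=\epsilon u$ turns the second into $\int_{\epsilon}^{\infty}v^{-1}e^{-v}\,dv$. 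Writing this last integral as $\int_{\epsilon}^{1}v^{-1}\,dv+\int_{\epsilon}^{1}v^{-1}(e^{-v}-1)\,dv+\int_{1}^{\infty}v^{-1}e^{-v}\,dv$ and using that $(e^{-v}-1)/v$ is bounded on $(0,1]$, one gets $\int_{\epsilon}^{\infty}v^{-1}e^{-v}\,dv=\log(1/\epsilon)+O(1)$. Hence $J(-1,\sigma)=\log(1/(2\sigma-1))+O(1)=(1+o(1))\log(1/(\sigma-1/2))$, the final step being valid because $\log(1/(2\sigma-1))$ and $\log(1/(\sigma-1/2))$ differ by the bounded amount $\log 2$ while both tend to $\infty$.

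I do not expect a genuine obstacle: every ingredient is a classical asymptotic of the incomplete Gamma function or the exponential integral. The only points deserving attention are that in the first case the error term must be shown to be $o(1)$ relative to the (possibly unbounded) factor $(2\sigma-1)^{-\gamma-1}$, and that in the borderline case the additive $O(1)$ is indeed negligible against $\log(1/(\sigma-1/2))$; both are immediate once the decompositions above are in place. When feeding the lemma back into the proof of Theorem \ref{teorema alpha} one should just keep in mind that the reduction of $\zeta_{\alpha}^{(\beta)}(2\sigma)$ to $J(\alpha+\beta,\sigma)$ has already absorbed the $p\le 2$ terms and the factor $2\sigma+o(1)$ into the stated $O(1)$ and $1+o(1)$, so no further bookkeeping is needed there.
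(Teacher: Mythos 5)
Your proof is correct and follows essentially the same route as the paper: the substitution reducing $J(\gamma,\sigma)$ to an incomplete Gamma/exponential-integral asymptotic in $\epsilon=2\sigma-1$, with the three regimes handled exactly as in the paper's argument. You merely supply details the paper compresses (the explicit $o(1)$ error bound for $\gamma>-1$, the dominated-convergence justification for $\gamma<-1$, and the bookkeeping of the $O(1)$ terms when $\gamma=-1$), all of which are fine.
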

	\begin{proof}[Proof of Lemma \ref{lemma integral J}]
		The case $\gamma<-1$ is easy. Let then $\gamma>-1$. Let $u=(2\sigma-1)\log x$. Then
		\begin{align*}
			J(\gamma,\sigma)&=\frac{1}{(2\sigma-1)^{1+\gamma}}\int_{(2\sigma-1)\log 2}^\infty u^{(1+\gamma)-1} e^{-u}du\\
			&=(1+o(1))\frac{\Gamma(\gamma+1)}{(2\sigma-1)^{1+\gamma}}.
		\end{align*}
		In the case that $\gamma=-1$,
		\begin{align*}
			J(-1,\sigma)&=\int_{(2\sigma-1)\log 2}^\infty \frac{ e^{-u}}{u}du\\
			&=\int_{(2\sigma-1)\log 2}^{1/100} \frac{ e^{-u}}{u}du+O(1)\\
			&=\int_{(2\sigma-1)\log 2}^{1/100} \frac{ 1+O(u)}{u}du+O(1)\\
			&=\log\left(\frac{1}{\sigma-1/2}\right)+O(1).
		\end{align*}
		This proves the lemma. \end{proof}
	
	Now we continue with the proof of Theorem \ref{teorema alpha}. We begin by estimating $\EE N_\alpha(T,1/2+\delta)$ for some small $\delta>0$.
	
	\noindent{Case $\alpha>-1$}. In this case, by Lemma \ref{lemma integral J} we have that
	\begin{align*}
		\EE N_\alpha(T,1/2+\delta)=\frac{1}{\pi}\int_{T}^{1/2+\delta} \bigg{(}&\frac{(1+o(1))\Gamma(3+\alpha)(2\sigma-1)^{-(3+\alpha)}}{(1+o(1))\Gamma(1+\alpha)(2\sigma-1)^{-(1+\alpha)}}\\
		-&\left(\frac{(1+o(1))\Gamma(2+\alpha)(2\sigma-1)^{-(2+\alpha)}}{(1+o(1))\Gamma(1+\alpha)(2\sigma-1)^{-(1+\alpha)}}\right)^{2}\bigg{)}^{1/2}d\sigma.
	\end{align*}
	Due to the property that $\Gamma(z+1)=z\Gamma(z)$, the last expression simplifies to
	\begin{align*}
		\EE N_\alpha(T,1/2+\delta)=&(1+o(1))\frac{\sqrt{1+\alpha}}{\pi}\int_{T}^{1/2+\delta}\frac{1}{2\sigma-1}d\sigma\\
		&=(1+o(1))\frac{\sqrt{1+\alpha}}{2\pi}\log\left(\frac{1}{T-1/2}\right).
	\end{align*}

	\noindent{Case $\alpha=-1$}. In this case
	\begin{align*}
		\EE N_{-1}(T,1/2+\delta)=&\frac{1}{\pi}\int_{T}^{1/2+\delta} \bigg{(}\frac{(1+o(1))\Gamma(2)(2\sigma-1)^{-2}}{\log(1/(2\sigma-1))}\bigg{)}^{1/2}d\sigma\\
		&=\frac{(1+o(1))}{2\pi}\int_{2T-1}^{2\delta}\frac{1}{x\sqrt{-\log x}}dx\\
		&=\frac{(1+o(1))}{2\pi}\int_{-\log(2T-1)}^{-\log(2\delta)}-v^{-1/2}dv\\
		&=\frac{(1+o(1))}{\pi}\sqrt{\log\left(\frac{1}{T-1/2}\right)}.
	\end{align*}
	
	\noindent{Case $\alpha<-1$}. In this case we have that $\zeta_\alpha(2\sigma)=c+o(1)$, as $\sigma\to1/2^+$, for some $c>0$. The proof of this case follows the idea of the previous ones, in which the function $J(\gamma,\sigma)$ of Lemma \ref{lemma integral J} is analyzed. For that, we will have to divide our proof in the cases $-2<\alpha<-1$, $\alpha=-2$, $-3<\alpha<-2$, $\alpha=-3$ and $\alpha<-3$. We will present the details only for the case $-2<\alpha<-1$. The other cases can be treated similarly.
	
	Let then $-2<\alpha<-1$. We have that
	\begin{align*}
		\EE N_\alpha(T,1/2+\delta)=\frac{1}{\pi}\int_{T}^{1/2+\delta} &\bigg{(}\frac{(1+o(1))\Gamma(3+\alpha)(2\sigma-1)^{-(3+\alpha)}}{c+o(1)}\\
		-&\left(\frac{(1+o(1))\Gamma(2+\alpha)(2\sigma-1)^{-(2+\alpha)}}{c+o(1)}\right)^{2}\bigg{)}^{1/2}d\sigma.
	\end{align*}
	The function inside the square-root above behaves, as $\sigma\to1/2^+$, as a constant times 
	$$(2\sigma-1)^{-\frac{3+\alpha}{2}}.$$
	Apart from the fact that this function blows as $\sigma\to1/2^+$, we have that the exponent $(3+\alpha)/2$ lies in the interval $(1/2,1)$, and hence the hole function is integrable in the interval $[1/2^+,1]$.
	
	Now we will show that in any case, $\EE N(1/2+\delta,\infty)$ is a real number. The function $\zeta_\alpha(\sigma)$ converges absolutely for $\sigma>1$, and hence it is an analytic function. Further, $\zeta_\alpha(\sigma)$ is a series of positive numbers, and hence $\zeta_{\alpha}(\sigma)\neq 0$ for all $\sigma>1$. Hence $\EE N_{\alpha}(1/2+\delta,100)$ is the definite integral of a continuous function, a real number. Consider now
	$$F(\sigma)=\sum_{n=1}^\infty\frac{X_{p_n}}{p_n^\sigma}=p_1^{-\sigma}\sum_{n=1}^\infty\frac{X_{p_n}}{(p_n/p_1)^{\sigma}}:=p_1^{-\sigma}G(\sigma).$$
	Thus, $F(\sigma)$ share same zeros with $G(\sigma)$. Now we can write 
	$$\zeta_{\mathcal{Q}}(\sigma):=\sum_{q\in\mathcal{Q}}\frac{1}{q^\sigma},$$
	where $\mathcal{Q}=\{q_1=1<q_2<q_3<...\}$ and $q_n=p_n/p_1$, for all $n$.
	Thus, $\zeta_{\mathcal{Q}}(\sigma)>1$ for all $\sigma>1$ and $\lim_{\sigma\to\infty}\zeta_{\mathcal{Q}}(\sigma)=1$. Hence
	
	\begin{align*}
		\EE N_\alpha(100,L)&\ll \int_{100}^L\sqrt{\frac{\zeta_{\mathcal{Q}}''(2\sigma) }{\zeta_{\mathcal{Q}}(2\sigma)}}d\sigma\\
		&\ll \int_{100}^L\sqrt{\zeta_{\mathcal{Q}}''(2\sigma)} d\sigma\\
		&\ll \int_{100}^L\sum_{q>1}\frac{\log q}{q^\sigma} d\sigma\\
		&\ll \sum_{q>1}\frac{1}{q^{100}}\\
		&<\infty.
	\end{align*}
	This shows that $\EE N(100,\infty)$ is a real number, and this ends the proof. \end{proof}

\section{Concluding remarks}
\subsection{Almost sure lower bound} We believe that our almost sure lower bound is far from being optimal, and we include it here for completeness. We describe an approach that could perhaps replace the exponent $1/2$ by $1$. If instead of considering
$S_{+}$ and $S_{-}$, we work directly with
$$S(R)=\sum_{n\leq R}\ind_{[F(\sigma_n)>0]}\ind_{[F(\sigma_{n+1})<0]},$$
then the number of zeros can be lower bounded directly by a rescaled quantity involving $S(R)$. The problem that we could not solve is to compute the correlations of $\{\ind_{[F(\sigma_n)>0]}\ind_{[F(\sigma_{n+1})<0]}\}_{n\geq 1}$, since this involves quadruple integrals and not so nice as in Lemma \ref{lemma correlacao de eventos}. We hope to investigate this in another occasion.

\subsection{General random Dirichlet series} It is interesting to observe from formula \eqref{formula zeros alpha} that a number $\lambda>0$ such that
\begin{equation*}
	\pi(x)=(\lambda+o(1))x(\log x)^\alpha
\end{equation*}
has no effect in the asymptotics of $\EE N_\alpha(T,U)$. 

Another interesting remark comes from the fact that we could deal with a slight more general random Dirichlet series if we allow to put extra weights $\{a_p\geq 0:p\in\mathcal{P}\}$:
$$F(\sigma)=\sum_{p\in\mathcal{P}} \frac{a_p X_p}{p^\sigma}.$$
All we need to do is to make regularity assumptions on the partial sums
$$\pi_*(x):=\sum_{p\leq x}a_p^2.$$

In this case, formula \eqref{formula zeros alpha} remains valid if we replace $\zeta_\alpha$ by 
$$\zeta_*(s):=\sum_{p\in\mathcal{P}}\frac{a_p^2}{p^s}.$$

The results of Theorem \ref{teorema alpha} remains unchanged if we worked out with assumptions on $\pi_*(x)$ instead of $\pi(x)$, since all what matters is the behaviour of $\zeta_*(s)$ around its singularity.

An interesting example comes when we consider $\mathcal{P}=\mathbb{N}$ and $a_n=\sqrt{\tau(n)}$, where $\tau(n)$ is the number of positive divisors of $n$. In this case, $\zeta_*(s)=\zeta^2(s)$ and the expected number of zeros will be just $\sqrt{2}$ times $\EE N(T,\infty)$ given by Theorem \ref{teorema principal}.

\noindent \textbf{Acknowledgements}. We are warmly thankful to Roberto Imbuzeiro and to user Iosif Pinelis from \textit{mathoverflow} for helping us with Lemma \ref{lemma correlacao de eventos}. This project is supported by CNPq, grant Universal number 403037/2021-2 and was completed while the first author was a visiting professor at Aix-Marseille Université. He is thankful to CNPq for supporting this visit with the grant Bolsa PDE, number 400010/2022-4 (200121/2022-7). The revision of this paper was done after the authors were granted by FAPEMIG. We thank FAPEMIG for supporting us with `Universal', grant number APQ-00256-23. 

{\small{\sc \noindent
Departamento de Matem\'atica, Universidade Federal de Minas Gerais (UFMG), Av. Ant\^onio Carlos, 6627, CEP 31270-901, Belo Horizonte, MG, Brazil.} \\
\textit{Email address:} \verb|aymone.marco@gmail.com| }

{\small{\sc \noindent
Departamento de Matem\'atica, Universidade Federal do Rio Grande do Sul (UFRGS), Av. Bento Gonçalves, 9500 Prédio 43-111 – Agronomia
91509-900 Porto Alegre, RS, Brazil.} \\
\textit{Email address:} \verb|sfrometa@gmail.com|, \verb|ricardomisturini@gmail.com|}


\begin{thebibliography}{99}
\bibitem{apostol}
{\sc T.~M. Apostol}, {\em Introduction to analytic number theory},
Springer-Verlag, New York, 1976.
\newblock Undergraduate Texts in Mathematics.

\bibitem{aymone_real_zeros}
{\sc M.~Aymone}, {\em Real zeros of random {D}irichlet series}, Electron.
Commun. Probab., 24 (2019), pp.~Paper No. 54, 8.

\bibitem{aymone_LIL}
{\sc M.~Aymone, S.~Fr\'{o}meta, and R.~Misturini}, {\em Law of the iterated
	logarithm for a random {D}irichlet series}, Electron. Commun. Probab., 25
(2020), pp.~Paper No. 56, 14.

\bibitem{marynich_random_dirichlet}
{\sc D.~Buraczewski, C.~Dong, A.~Iksanov, and A.~Marynych}, {\em Limit theorems
	for random {D}irichlet series}, arXiv:2211.00145,  (2022).

\bibitem{pena_and_gine_decoupling}
{\sc V.~H. de~la Pe\~{n}a and E.~Gin\'{e}}, {\em Decoupling}, Probability and
its Applications (New York), Springer-Verlag, New York, 1999.
\newblock From dependence to independence, Randomly stopped processes.
$U$-statistics and processes. Martingales and beyond.

\bibitem{vu_repulsion}
{\sc Y.~Do, H.~Nguyen, and V.~Vu}, {\em Real roots of random polynomials:
	expectation and repulsion}, Proc. Lond. Math. Soc. (3), 111 (2015),
pp.~1231--1260.

\bibitem{edelman_roots_random_poly}
{\sc A.~Edelman and E.~Kostlan}, {\em How many zeros of a random polynomial are
	real?}, Bull. Amer. Math. Soc. (N.S.), 32 (1995), pp.~1--37.

\bibitem{erdos_random_poly}
{\sc P.~Erd\"{o}s and A.~C. Offord}, {\em On the number of real roots of a
	random algebraic equation}, Proc. London Math. Soc. (3), 6 (1956),
pp.~139--160.

\bibitem{ibragimov_random_poly_1}
{\sc I.~A. Ibragimov and N.~B. Maslova}, {\em The average number of real roots
	of random polynomials}, Dokl. Akad. Nauk SSSR, 199 (1971), pp.~13--16.

\bibitem{ibragimov_random_poly_2}
\leavevmode\vrule height 2pt depth -1.6pt width 23pt, {\em The mean number of
	real zeros of random polynomials. {II}. {C}oefficients with a nonzero mean},
Teor. Verojatnost. i Primenen., 16 (1971), pp.~495--503.

\bibitem{kac_random_poly}
{\sc M.~Kac}, {\em On the average number of real roots of a random algebraic
	equation}, Bull. Amer. Math. Soc., 49 (1943), pp.~314--320.

\bibitem{Lang}
{\sc S.~Lang}, {\em Complex analysis}, vol.~103 of Graduate Texts in
Mathematics, Springer-Verlag, New York, fourth~ed., 1999.

\bibitem{Littlewood_random_pol_1}
{\sc J.~E. Littlewood and A.~C. Offord}, {\em On the {N}umber of {R}eal {R}oots
	of a {R}andom {A}lgebraic {E}quation}, J. London Math. Soc., 13 (1938),
pp.~288--295.

\bibitem{Littlewood_random_pol_3}
\leavevmode\vrule height 2pt depth -1.6pt width 23pt, {\em On the number of
	real roots of a random algebraic equation. {III}}, Rec. Math. [Mat. Sbornik]
N.S., 12(54) (1943), pp.~277--286.

\bibitem{littlewood_random_int_1}
\leavevmode\vrule height 2pt depth -1.6pt width 23pt, {\em On the distribution
	of the zeros and {$a$}-values of a random integral function. {I}}, J. London
Math. Soc., 20 (1945), pp.~130--136.

\bibitem{littlewood_random_int_2}
\leavevmode\vrule height 2pt depth -1.6pt width 23pt, {\em On the distribution
	of zeros and {$a$}-values of a random integral function. {II}}, Ann. of Math.
(2), 49 (1948), pp.~885--952; errata 50, 990--991 (1949).

\bibitem{montgomerylivro}
{\sc H.~L. Montgomery and R.~C. Vaughan}, {\em Multiplicative number theory.
	{I}. {C}lassical theory}, vol.~97 of Cambridge Studies in Advanced
Mathematics, Cambridge University Press, Cambridge, 2007.

\bibitem{oanh_local_universality}
{\sc O.~Nguyen and V.~Vu}, {\em Roots of random functions: a framework for
	local universality}, Amer. J. Math., 144 (2022), pp.~1--74.

\bibitem{shiryaev}
{\sc A.~N. Shiryaev}, {\em Probability}, vol.~95 of Graduate Texts in
Mathematics, Springer-Verlag, New York, second~ed., 1996.
\newblock Translated from the first (1980) Russian edition by R. P. Boas.

\bibitem{stout}
{\sc W.~F. Stout}, {\em Almost sure convergence.}, Academic Press [Harcourt
Brace Jovanovich, Publishers], New York-London, 1974.

\end{thebibliography}
\end{document}